\documentclass{amsart}

\usepackage{graphicx} 
\usepackage[margin=3cm]{geometry}
\usepackage{fourier}
\usepackage{amssymb}
\usepackage{comment}
\usepackage{todonotes}
\PassOptionsToPackage{hyphens}{url}
\usepackage{hyperref}
\usepackage{xcolor}
\usepackage[inline]{enumitem}
\usepackage{ulem}

\newtheorem{theorem}{Theorem}
\newtheorem{proposition}[theorem]{Proposition}
\newtheorem{lemma}[theorem]{Lemma}
\newtheorem{corollary}[theorem]{Corollary}
\newtheorem{question}[theorem]{Question}
\newtheorem{questions}[theorem]{Questions}

\theoremstyle{remark}

\newtheorem{example}[theorem]{Example}
\newtheorem{examples}[theorem]{Examples}
\newtheorem{remark}[theorem]{Remark}
\newtheorem{remarks}[theorem]{Remarks}

\definecolor{blue-url}{RGB}{0,0,100}
\definecolor{red-url}{RGB}{100,0,0}
\definecolor{green-url}{RGB}{0,100,0}

\hypersetup{
	pdftitle={Isomorphism problems for ideal semigroups and power semigroups},
	pdfauthor={},
	pdfmenubar=false,
	pdffitwindow=true,
	pdfstartview=FitH,
	colorlinks=true,
	linkcolor=blue-url,
	citecolor=green-url,
	urlcolor=red-url
}

\newcommand{\evid}[1]{\textsf{#1}}
\newcommand{\red}{\mathrm{red}}
\newcommand{\fin}{\mathrm{fin}}

\providecommand\llb{\llbracket}
\providecommand\rrb{\rrbracket}

\renewcommand{\emptyset}{\varnothing}
\renewcommand{\setminus}{\smallsetminus}
\renewcommand{\,}{\kern 0.1em}

\makeatletter
\def\@bignumber#1#2{%
  \ifx#2\end
    #1\let\next\@gobble
  \else
    #1\hspace{0pt plus 1pt}\let\next\@bignumber
  \fi
  \next#2}
\newcommand{\bignumber}[1]{\@bignumber#1\end}
\makeatother

\title{Semigroups of ideals and isomorphism problems}

\author{Pedro A.~Garc\'ia-S\'anchez}
\address{(P.\,A.~Garc\'ia-S\'anchez) Departamento de \'Algebra and IMAG, Universidad de Granada | E-18071 Granada, Spain}
\email{pedro@ugr.es}
\urladdr{https://www.ugr.es/local/pedro}

\author{Salvatore Tringali}
\address{(S.~Tringali) School of Mathematical Sciences, Hebei Normal University | Shijiazhuang, Hebei province, 050024 China}
\email{salvo.tringali@gmail.com}
\urladdr{http://imsc.uni-graz.at/tringali}

\subjclass[2020]{Primary 20M12, 20M13. Secondary 11B30, 20M35.}
\keywords{Isomorphism problems, power semigroups, semigroups of ideals, Archimedean-like properties.}

\date{\today}

\begin{document}

\begin{abstract}
Let $H$ be a monoid (written multiplicatively). 
We call $H$ Archimedean if, for all $a, b \in H$ such that $b$ is a non-unit, there is an integer $k \ge 1$ with $b^k \in HaH$; strongly Archimedean if, for each $a \in H$, there is an integer $k \ge 1$ such that $HaH$ contains any product of any $k$ non-units of $H$; and duo if $aH = Ha$ for all $a \in H$. 

We prove that the ideals of two strongly Archimedean, cancellative, duo monoids make up isomorphic semigroups under the induced operation of setwise multiplication if and only if the monoids themselves are isomorphic up to units; and the same holds upon restriction to finitely generated ideals in Archimedean, cancellative, duo monoids. 
Then we use the previous results to tackle a new case of a problem of Tamura and Shafer from the late 1960s. 
\end{abstract}

\maketitle

\section{Introduction}
\label{sect:1}

Let $S$ be a semigroup. (We refer to Grillet's monograph \cite{Gri-95} for generalities on semigroups and monoids; and unless stated otherwise, we assume all semigroups are non-empty and write them multiplicatively.) Equipped with the (binary) op\-er\-a\-tion of setwise multiplication defined on the whole power set of $S$ by
$$
(X, Y) \mapsto XY := \{xy : x \in X,\, y \in Y\},
$$
the non-empty subsets of $S$ form a semigroup, herein denoted by $\mathcal P(S)$ and called the (\evid{large}) \evid{power semigroup} of $S$. Intensively studied by semigroup theorists and computer scientists through the 1980s and 1990s (partly due to their role in the theory of formal languages and automata), these types of structures offer a natural algebraic framework for old and new problems in additive combinatorics and related fields (see \cite{Bie-Ger-22, Tri-Yan2023(a), Tri-Yan2023(b)} and references therein); and they have played a pivotal role in the ongoing development of an arithmetic theory of semigroups and rings (see, for instance, \cite{Fan-Tri-2018, An-Tr-2021}, Section 4.2 in \cite{Tri-2022(a)}, and Example 4.5(3) in \cite{Cos-Tri-2023}) extending beyond the (somewhat narrow) boundaries of the classical theory \cite{GeHK06}. 

Power semigroups were first systematically investigated by Tamura and Shafer \cite{Tam-Sha1967} in the late 1960s (for historical remarks on power algebras, see \cite[Section 2]{Bri-1990}), raising the following:

\begin{question}\label{quest:1}
Suppose that a semigroup $H$ is \evid{globally isomorphic} to a semigroup $K$, meaning that $\mathcal P(H)$ is (semigroup-)isomorphic to $\mathcal P(K)$. Is it necessarily true that $H$ is isomorphic to $K$? 
\end{question}

The question was quickly answered in the negative by Mogiljanskaja \cite{Mog1973}, but remains widely open, among others, for \textit{finite} semigroups \cite[p.~5]{Ham-Nor2009}. More generally, one can ask:

\begin{question}\label{quest:2}
Given a class $\mathcal C$ of semigroups, prove or disprove that $H$ is globally isomorphic to $K$, for some $H, K \in \mathcal C$, if and only if $H$ is isomorphic to $K$.
\end{question}

It is easy to check (see, e.g., Remark 2.4(3) in \cite{Tri-2023(c)}) that if $f \colon H \to K$ is a semigroup iso\-mor\-phism, then the same holds for the function 
$F \colon \mathcal P(H) \to \mathcal P(K)$, $X \mapsto f[X]$, where $f[X] := \{f(x) : x \in X\}$ is the direct image of $X$ under $f$; in the terminology of \cite{Tri-Yan2023(a)}, we refer to $F$ as the \evid{augmentation} of $f$.
So, the main point of Question~\ref{quest:2} is the ``only if'' direction. For example, the answer to the question is in the affirmative for groups \cite{Sha67}, completely $0$-simple semi\-groups \cite[Theorem 5.9]{Tam-86}, and Clifford semi\-groups \cite[Theorem 4.7]{Gan-Zhao-2014}. 

The next question is a variant on the same theme that appears to have its origins in the work of Kobayashi \cite{Koba-84}, who lends part of the credit (or at least of the motivation) to Gould and Iskra \cite{Gou-Isk-1984}.

\begin{question}\label{quest:kobayashi}
Prove or disprove that, for all $H$ and $K$ in a given class of semigroups, every (semigroup) isomorphism $f$ from $\mathcal P(H)$ to $\mathcal P(K)$ restricts to an isomorphism from $H$ to $K$ (by which we mean that $f$ bijectively maps one-element subsets of $H$ to one-element subsets of $K$).
\end{question}

Obviously, a positive answer to Question \ref{quest:kobayashi} implies a positive answer to Question \ref{quest:2}. However, Kobayashi himself observed in \cite[Remark 1]{Koba-84} that the converse need not be true and proved in the same paper (see the unnumbered theorem on p.~218) that Question \ref{quest:kobayashi} has a positive answer within the class of semilattices.

There are many more problems in a similar spirit that are both of independent interest and helpful in dealing with Question~\ref{quest:2}. In particular, it is fairly clear that the non-empty \textit{finite} subsets of $S$ form a subsemigroup of $\mathcal P(S)$, herein denoted by $\mathcal P_\fin(S)$. We are thus prompted to the next questions.

\begin{questions}\label{quest:2fin}
Given a class $\mathcal C$ of semigroups, prove or disprove that, for all $H, K \in \mathcal C$, either of the following is true:
\begin{enumerate}[label=\textup{(\arabic{*})}]
\item\label{quest:2fin(1)} $\mathcal P_\fin(H)$ is isomorphic to $\mathcal P_\fin(K)$ if and only if $H$ is isomorphic to $K$.
\item\label{quest:2fin(2)} Every isomorphism from $\mathcal P_\fin(H)$ to $\mathcal P_\fin(K)$ restricts to an isomorphism from $H$ to $K$.
\item\label{quest:2fin(3)} Every isomorphism from $\mathcal P(H)$ to $\mathcal P(K)$ restricts to an isomorphism from $\mathcal P_\fin(H)$ to $\mathcal P_\fin(K)$.
\end{enumerate}
\end{questions}

In \cite[Theorem 3.2(3)]{Bie-Ger-22}, Bienvenu and Geroldinger answer Question~\ref{quest:2fin}\ref{quest:2fin(2)} in the affirmative when $\mathcal C$ is the class of \evid{numerical monoids} \cite{ns-ap}, i.e., the submonoids of $(\mathbb N, +)$ with finite complement in $\mathbb N$ (we use $\mathbb N$ for the set of non-negative integers and $\mathbb N^+$ for the positive integers).
Tringali has subsequently shown, as an instance of a more general result, that 
Questions~\ref{quest:kobayashi} and \ref{quest:2fin}\ref{quest:2fin(2)} have both a positive answer when $\mathcal C$ is the larger class of cancellative commutative semigroups \cite[Corollary 2.6]{Tri-2023(c)}. (We recall that the semigroup $S$ is \evid{cancellative} if $ax \ne ay$ and $xa \ne ya$ for all $a, x, y \in S$ with $x \ne y$.) 
The case of cancellative, but not necessarily commutative, semigroups is still open, and there is more to the story.

Let a (\evid{two-sided}) \evid{ideal} of the semigroup $S$ be a \textit{non-empty} set $I \subseteq S$ such that $IS \subseteq I$ and $SI \subseteq I$ (see~\cite[p.~16]{Gri-95}, where the empty set is, however, also considered to be an ideal). If $I$ and $J$ are ideals of $S$, then $(IJ)S = I(JS) \subseteq IJ$ and, similarly, $S(IJ) \subseteq IJ$. Therefore, the set of ideals of $S$ forms a subsemigroup of $\mathcal P(S)$, herein denoted by $\mathfrak I(S)$ and called the \evid{ideal semigroup} of $S$. This leads us to the next item on our problem list, which is a generalization of a question arisen during the Algebra and Number Theory seminar at the University of [\textit{omissis}] in Nov 2023, 
in the special case where $\mathcal C$ is the class of cancellative, finitely generated, commutative monoids with trivial group of units (the latter condition on the units is suggested by the observation that the ideal semigroup of \textit{any} group is trivial, see Remark \ref{rem:duo-makes-it-easier}\ref{rem:duo-makes-it-easier(2)}).

\begin{question}\label{quest:3}
Find (non-trivial) conditions for two semigroups to be isomorphic (if necessary, modulo a certain congruence) if and only if their ideal semigroups are isomorphic.
\end{question}

The interesting aspect of Question~\ref{quest:3} lies in the ``if'' direction. In fact, let $F$ be the augmentation of a semigroup isomorphism $f \colon H \to K$, and let $I \in \mathfrak I(H)$. By surjectivity of $f$, we have $K = f[H] = F(H)$. Since $F$ is a homomorphism $\mathcal P(H) \to \mathcal P(K)$ with the monotonicity property that $X \subseteq Y \subseteq H$ implies $F(X) = f[X] \subseteq f[Y] = F(Y)$, we find that $F(I) K = F(IH) \subseteq F(I)$ and, in an analogous way, $K F(I) \subseteq F(I)$. Namely, $F(I)$ is an ideal of $K$ and, hence (by applying the same argument to the functional inverse $f^{-1}$ of $f$), $F$ restricts to a semigroup isomorphism $\mathfrak I(H) \to \mathfrak I(K)$.

That being said, let $I$ be a non-empty subset of the semigroup $S$ and denote by $\widehat{S}$ a \evid{unitization} of $S$, i.e., the monoid obtained from $S$ by adjoining an identity element if needed (so that $\widehat{S} = S$ when $S$ is already a monoid). If $I = \widehat{S} X \widehat{S}$ for some (non-empty) $X \subseteq S$, then $IS = \widehat{S} X \widehat{S} S \subseteq \widehat{S} X \widehat{S} = I$ and similarly $SI \subseteq I$, i.e., $I$ is an ideal of $S$. Accordingly, we call $I$ a \evid{finitely generated} ideal of $S$ if there exists a finite subset $X$ of $S$ such that $I = \widehat{S} X \widehat{S}$. In particular, we define $I$ to be a \evid{principal ideal} of $S$ if $I = \widehat{S} x \widehat{S}$ for some $x \in S$, with the standard convention that braces are omitted whenever a singleton
appears within a product in a power semigroup. 

In general, the finitely generated ideals of $S$ do not make up a subsemigroup of $\mathfrak I(S)$, and neither do the principal ideals (cf.~Remark \ref{rem:duo-makes-it-easier}\ref{rem:duo-makes-it-easier(4)}).
Accordingly, we denote by $\mathfrak I_\fin(S)$ the subsemigroup of $\mathfrak I(S)$ generated by the finitely generated ideals of $S$, and by $\mathfrak P(S)$ the subsemigroup of $\mathfrak I_\fin(S)$ generated by the principal ideals (of course, every principal ideal is finitely generated). We thus arrive at the last questions on our list.

\begin{questions}\label{quest:5}
Find (non-trivial) conditions for the semigroups $H$ and $K$ to satisfy either of the following:
\begin{enumerate}[label=\textup{(\arabic{*})}]
\item\label{quest:5(1)} $\mathfrak I_\fin(H)$ is isomorphic to $\mathfrak I_\fin(K)$ if and only if $H$ is isomorphic to $K$.
\item\label{quest:5(2)} Every isomorphism from $\mathfrak I(H)$ to $\mathfrak I(K)$ restricts to an isomorphism from $\mathfrak P(H)$ to $\mathfrak P(K)$.
\item\label{quest:5(3)} Every isomorphism from $\mathfrak I_\fin(H)$ to $\mathfrak I_\fin(K)$ restricts to an isomorphism from $\mathfrak P(H)$ to $\mathfrak P(K)$.
\item\label{quest:5(4)} Every isomorphism from $\mathfrak I(H)$ to $\mathfrak I(K)$ restricts to an isomorphism from $\mathfrak I_\fin(H)$ to $\mathfrak I_\fin(K)$.
\item\label{quest:5(5)} Every isomorphism from $\mathcal P(H)$ to $\mathcal P(K)$ restricts to an isomorphism from $\mathfrak I(H)$ to $\mathfrak I(K)$.
\end{enumerate}
\end{questions}

Since a semigroup isomorphism is a bijection (on the level of the underlying sets), we gather from the above that the augmentation of a semigroup isomorphism $f \colon H \to K$ bijectively maps (non-empty) finite subsets of $H$ to finite subsets of $K$ and, hence, finitely generated ideals of $H$ to finitely generated ideals of $K$. So, once more, the essence of Questions~\ref{quest:2fin}\ref{quest:2fin(1)} and \ref{quest:5}\ref{quest:5(1)} lies in the ``only if'' part. 
Also, a positive answer to Question~\ref{quest:5}\ref{quest:5(2)} (resp., to Question \ref{quest:5}\ref{quest:5(3)}) implies a positive answer to Question \ref{quest:3} (resp., to Question \ref{quest:5}\ref{quest:5(1)}) assuming that $S$ is isomorphic to $\mathfrak P(S)$ for every $S \in \mathcal C$ (sufficient conditions for this to happen are provided in Lemma \ref{lem:pi-dc-general}). Finally, Question \ref{quest:5}\ref{quest:5(5)} establishes a connection between Questions~\ref{quest:3} and \ref{quest:kobayashi}, suggesting that, in addition to their intrinsic interest, isomorphism problems for semigroups of ideals could offer insights into isomorphism problems for power semigroups (see Section \ref{sec:back-to-power-sgrps} for an example of this principle in action).

With these preliminaries in place, we say that the semigroup $S$ is \evid{duo} if $aS = Sa$ for every $a \in S$;  \evid{Ar\-chi\-me\-de\-an} if, for all $a, b \in S$ such that $b$ is not a unit in $\widehat{S}$, there exists $k \in \mathbb N^+$ with $b^k \in \widehat{S} a \widehat{S}$; and \evid{strongly Ar\-chi\-me\-de\-an} if, for every $a \in S$, there exists $k \in \mathbb N^+$ such that the set $\widehat{S}a\widehat{S}$ contains any product $b_1 \cdots b_k$ of any $k$ non-units $b_1, \ldots, b_k \in \widehat{S}$. Clearly, every strongly Archimedean semigroup is Archimedean (let $b$ be an arbitrary non-unit in $\widehat{S}$ and choose $b_1 = \cdots = b_k = b$ in the last definition);  for a partial converse, see Proposition \ref{prop:arch-implies-strongly-arch}.

Our main goal is to give a positive answer to Question \ref{quest:5}\ref{quest:5(2)} for the class of strongly Ar\-chi\-me\-de\-an, cancellative, reduced, duo monoids (Theorem~\ref{th:isom-ideals-sp}), and to Question~\ref{quest:5}\ref{quest:5(3)} for the class of Ar\-chi\-me\-de\-an, cancellative, reduced, duo monoids (Theorem~\ref{thm:isom-ideals-fg-primary}). Here, a monoid is \evid{reduced} if its only unit is the identity. On the way, we obtain a characterization of strongly Archimedean, duo monoids in terms of a structural property of their ideals (Proposition \ref{prop:awesome-strongly-primary}); and of Archimedean, \textit{cancellative}, duo monoids in terms of a corresponding property of their finitely generated ideals (Proposition \ref{prop:primary-dc-fg-contains-pi}).

Archimedean and strongly Archimedean semigroups have been extensively studied in the mathematical literature, though mainly in the cancellative, commutative setting, where they are sometimes called primary and strongly primary, respectively. Rational semigroups in the sense of Sasaki and Tamura \cite{Sas-Tam-71} (i.e., additive subsemigroups of the non-negative cone of the ordered field of rational numbers) are Archimedean, cancellative, commutative semigroups (see the proof of Corollary \ref{cor:fin-generated-ideals-in-puiseux-monoids}); and numerical semigroups (i.e., additive subsemigroups of the non-negative integers with finite complement in $\mathbb N$) are strongly Archimedean (see the proof of Corollary \ref{cor:numerical-mons}). Note that some authors do not allow groups in the class of strongly Archimedean semigroups (see, for instance, ~\cite{sp-pm} and references therein). Nevertheless, according to our definitions, a group is strongly Archimedean in a vacuous way (for every element in a group is a unit).

As for duo semigroups (also known under the name of normalizing semigroups), they form an interesting class of semigroups halfway between the commutative and the non-commutative worlds. Introduced by Feller \cite{Fel-1958} in the context of rings (a ring is duo if its multiplicative semigroup is duo), they have been studied ever since by a number of authors, see, e.g., \cite{Brun-1975, Cou-1982, Yu-1995, Mar-2004, ger-osaka} and \cite[Section 5]{Cos-Tri-2023}.

With that said, let us note that any group is a strongly Archimedean, cancellative, duo monoid (in a trivial way). Moreover, there is no shortage of strongly Archimedean, cancellative, \textit{commutative} monoids in the literature. The constructions below offer non-trivial non-commutative examples (see also Example \ref{exa:more-examples}).

\begin{examples}\label{exa:strongly-archi-cance-non-commutative-duo}
\begin{enumerate*}[label=\textup{(\arabic{*})}, mode=unboxed]
\item\label{exa:strongly-archi-cance-non-commutative-duo(1)}
Let $R$ be a \evid{right discrete valuation domain}, i.e., a (commutative or non-commutative) domain with a non-unit $p \in R$ such that every non-zero element $a \in R$ can be written in
the form $p^n u$, where $n$ is a non-negative integer and $u \in R$ is a unit. Assume in addition that $pR^\times = R^\times p$, where $R^\times$ is the group of units of $R$; 
and let $R^\bullet$ be the multiplicative monoid of the non-zero elements of $R$. It is clear (by definition) that $R^\bullet = \bigcup_{n \in \mathbb N} p^n R^\times$, while a routine induction shows that $p^n R^\times = R^\times p^n$ and hence $(p^n R^\times)^k = p^{nk} R^\times$ for all $n, k \in \mathbb N$. This implies that $R^\bullet$ is a strongly Archimedean monoid, as the group of units of $R^\bullet$ is $R^\times$ and, for each $n \in \mathbb N$, we have $(R^\bullet \setminus R^\times)^{n+1} \subseteq p^n R^\bullet$. Moreover, $R^\bullet$ is duo, by the fact that, for every $m \in \mathbb N$ and $u \in R^\times$, 
\[
p^mu R^\bullet = p^m R^\bullet = \bigcup_{n \in \mathbb N} p^{m+n} R^\times = \bigcup_{n \in \mathbb N} R^\times p^{m+n} = R^\bullet p^m = R^\bullet p^m u.
\] 
\end{enumerate*}

\begin{enumerate*}[label=\textup{(\arabic{*})}, mode=unboxed, resume]
\item\label{exa:strongly-archi-cance-non-commutative-duo(2)}
Following \cite[Example (1.7)]{Lam-2001}, let $R$ be the ring of formal power series in one variable $x$ over a skew field $F$, with multiplication twisted by a ring automorphism $\sigma$ of $F$ in such a way that $ax = x\sigma(a)$ for every $a \in F$. We claim that $R$ satisfies all the conditions requested in item \ref{exa:strongly-archi-cance-non-commutative-duo(1)}.\\

\indent{}
To begin, we have from \cite[Exercise 19.7]{Lam-2003} that $R$ is a right discrete valuation domain. Moreover, a formal power series $f = \sum_{k \in \mathbb N} a_k x^k \in R$ is a unit if and only if $a_0$ is not the zero element $0_F$ of $F$. On the other hand, it is readily checked that $fx = xg$ and $xf = hx$, where $g := \sum_{k \in \mathbb N} \sigma(a_k) x^k$ and $h := \sum_{k \in \mathbb N} \sigma^{-1}(a_k) x^k$. Thus, we find that $R^\times x = x R^\times$ (since $a_0 \ne 0_F$ if and only if $\sigma(a_0) \ne 0_F$, if and only if $\sigma^{-1}(a_0) \ne 0_F$).\\

\indent{}Now, it is pretty obvious that $R^\bullet$ is a commutative monoid if and only if $\sigma$ is the identity map on $F$. So, for instance, we can make $R^\bullet$ non-commutative by choosing $F$ as a finite field of characteristic $p$ and order $q \neq p$, and $\sigma$ as the Frobenius automorphism of $F$.
\end{enumerate*}
\end{examples}

\section{Preparatory work and auxiliary results}

The problems listed in the introduction are phrased in the language of semigroups for the sake of generality and to facilitate comparison with analogous problems in the literature. From now on, our focus will shift on monoids (and more specifically on duo monoids), which are the central objects studied in the present work. Nevertheless, we will continue to formulate many ideas, concepts, and results in the parlance of semigroups. We begin with some preliminaries.

Let $H$ be a monoid. We write $1_H$ for the identity (element) of $H$ and $H^\times$ for its \evid{group of units}, i.e., the set of all $u \in H$ such that $uv = 1_H = vu$ for some $v \in H$, which is provably unique and hence denoted by $u^{-1}$. So, $H$ being reduced (see the third paragraph after Questions \ref{quest:5} in Section \ref{sect:1}) means that $H^\times = \{1_H\}$. It is a basic fact that $H^\times$ is a subgroup of $H$, and $u \in H$ is a unit if and only if $ux = yu = 1_H$ for some $x, y \in H$. We call $H$ a \evid{Dedekind-finite} monoid if $uv = 1_H$, for some $u, v \in H$, implies $vu = 1_H$ (or, equivalently, $uv \in H^\times$ implies $u, v \in H^\times$).

\begin{remark}\label{rem:morphisms}
Throughout, unless a statement to the contrary is made, a ``morphism'' will always be a \textit{semigroup} homomorphism. In particular, a \textit{monoid} homomorphism is a semigroup homomorphism from a monoid $H$ to a monoid $K$ that maps the identity of $H$ to the identity of $K$. It is easy to check that an isomorphism from a monoid to a monoid is, a fortiori, a monoid homomorphism, see, e.g., the last lines of \cite[Section 2]{Tri-2023(c)}.
\end{remark}

The presence of an identity simplifies many of the notions we have discussed so far. For future reference, we collect some simple observations related to this point in the next remarks.

\begin{remarks}
\label{rem:duo-makes-it-easier}
\begin{enumerate*}[label=\textup{(\arabic{*})}, mode=unboxed]
\item\label{rem:duo-makes-it-easier(1)}
A non-empty subset $I$ of a monoid $H$ is an ideal (of $H$) if and only if $IH = I = HI$; it suffices to consider that $I = I 1_H \subseteq IH$ and, similarly, $I \subseteq HI$. This shows that the ideal semigroup $\mathfrak I(H)$ of $H$ is a monoid with identity $H$ (rather than just a semigroup). Since $1_H H = H = H1_H$, it follows that $\mathfrak I_\fin(H)$ is a submonoid of $\mathfrak I(H)$, and $\mathfrak P(H)$ is a submonoid of $\mathfrak I_\fin(H)$. Likewise, the (large) power semigroup $\mathcal P(H)$ of $H$ is a monoid whose identity is the singleton $\{1_H\}$, and $\mathcal P_\fin(H)$ is a submonoid of $\mathcal P(H)$. Note, however, that $\mathfrak I(H)$ is not a submonoid of $\mathcal P(H)$ unless $H$ is a \evid{trivial} semigroup, namely, $H = \{1_H\}$.
\end{enumerate*}

\begin{enumerate*}[label=\textup{(\arabic{*})}, mode=unboxed, resume]
\item\label{rem:duo-makes-it-easier(2)} Let $I$ be an ideal of a monoid $H$. If $I$ contains a unit $u \in H$, then $I = H$. In fact, $1_H = \allowbreak u^{-1} u \in HI \subseteq I$ and hence $H = H1_H \subseteq HI \subseteq I$; it goes symmetrically with $IH$. Therefore, $I$ is a \evid{proper} ideal (i.e., $I$ is strictly contained in $H$) if and only if $I$ is disjoint from $H^\times$, which especially shows that the ideal semigroup of \textit{any} group is trivial (its only element being the group itself).
\end{enumerate*}

\begin{enumerate*}[label=\textup{(\arabic{*})}, mode=unboxed, resume]
\item\label{rem:duo-makes-it-easier(3)} Assume $H$ is a duo monoid. If $\emptyset \ne X \subseteq H$, then $XH = \bigcup_{x\in X}xH = \bigcup_{x\in H}Hx = HX$ and hence $HXH = XH^2 = XH = HX$. (We have $M \subseteq 1_M M \subseteq M^2 \subseteq M$ for any monoid $M$. We will frequently use this elementary observation without further mention later on.) It follows that a non-empty subset $I$ of $H$ is an ideal of $H$ if and only if $IH = I$. Moreover, every ideal of $H$ is of the form $XH$ for some non-empty set $X \subseteq H$, which is accordingly referred to as a \evid{generating set} of the ideal itself. Most notably, an ideal is finitely generated  if and only if it has a finite generating set; and is principal if and only if it is a set of the form $aH$ with $a \in H$.
\end{enumerate*}

\begin{enumerate*}[label=\textup{(\arabic{*})}, mode=unboxed, resume]
\item\label{rem:duo-makes-it-easier(4)}
Any unitization $\widehat{S}$ of a duo semigroup $S$ is a duo monoid. So, we have from item \ref{rem:duo-makes-it-easier(3)} that $(\widehat{S} X \widehat{S}) \allowbreak (\widehat{S} Y \widehat{S}) = \widehat{S} XY \widehat{S}$ for all non-empty $X, Y \subseteq S$. As a result, the setwise product of two finitely generated (resp., principal) ideals of $S$ is itself finitely generated (resp., principal). That is, an ideal $I$ of $S$ lies in $\mathfrak I_\fin(S)$ (resp., in $\mathfrak P(S)$) if and only if $I$ is a finitely generated (resp., principal) ideal.
\end{enumerate*}

\begin{enumerate*}[label=\textup{(\arabic{*})}, mode=unboxed, resume]
\item\label{rem:duo-makes-it-easier(5)}
It is clear from our definitions that a non-empty subset $I$ of a semigroup $S$ is an ideal of $S$ if and only if it is a \textit{proper} ideal of a unitization $\widehat{S}$ of $S$. Moreover, $S$ is duo if and only if $\widehat{S}$ is. Consequently, we gather from item \ref{rem:duo-makes-it-easier(3)} that, under the hypothesis of duoness, the ideals of $S$ are all and only the sets of the form $X\widehat{S}$ with $\emptyset \ne X \subseteq S$; most notably, the principal ideals of $S$ are the sets of the form $a \widehat{S}$ with $a \in S$.
\end{enumerate*}

\begin{enumerate*}[label=\textup{(\arabic{*})}, mode=unboxed, resume]
\item\label{rem:duo-makes-it-easier(6)}
Let $H$ be a duo monoid and suppose that $ab = 1_H$ for some $a, b \in H$. Since $aH = Ha$ and $bH = Hb$, there then exist $x, y \in H$ such that $xa = by = 1_H$. Thus, $a$ and $b$ are units, and $H$ is Dedekind-finite. It follows that if $H$ is not a group (i.e., $H \ne H^\times$), then $H \setminus H^\times$ is a proper ideal of $H$.
\end{enumerate*}
\end{remarks}

Given a semigroup $S$, we use the symbol $\mid_S$ for the \evid{divisibility preorder} on $S$, i.e., the binary relation defined by $a \mid_S b$, for some $a, b \in S$, if and only if $b \in \widehat{S}a\widehat{S}$, where $\widehat{S}$ is a unitization of $S$. 
We say that an element $a$ is \evid{associated} to an element $b$ in $S$ if $a \mid_S b$ and $b \mid_S a$ (or, equivalently, $\widehat{S}a\widehat{S} = \widehat{S}b\widehat{S}$). 
Being associated is clearly an equivalence, but need not be a (semigroup) congruence (see \cite[p.~10]{Gri-95} for terminology). Accordingly, we let the \evid{reduced quotient} of $S$, herein denoted by $S_\red$, be the quotient of $S$ by the smallest congruence containing the relation of associatedness.
It is a basic fact that the function $\pi \colon S \to S_\red$ sending an element $x \in S$ to its congruence class in the quotient $S_\red$ is a surjective homomorphism \cite[Proposition 3.2]{Gri-95}, called the \evid{canonical projection} of $S$ onto $S_\red$. The duo case is particularly smooth, as we are about to see.

\begin{proposition}\label{prop:red-duo-sgrp}
The following hold for a duo semigroup $S$:

\begin{enumerate}[label=\textup{(\arabic{*})}]
\item\label{prop:red-duo-sgrp(1)}
The relation of associatedness in $S$ is a congruence and the quotient $S_{\red}$ is itself a duo semigroup.
\item\label{prop:red-duo-sgrp(2)} If $S$ is cancellative, then so is $S_\red$.
\item\label{prop:red-duo-sgrp(3)} The reduced quotient of any unitization of $S$ is a reduced monoid.
\item\label{prop:red-duo-sgrp(4)} $S_\red$ is (semigroup-)isomorphic to $\mathfrak{P}(S)$.
\end{enumerate}
\end{proposition}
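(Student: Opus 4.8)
The plan is to reduce all four claims to one clean description of associatedness in the duo setting. Writing $\widehat{S}$ for a unitization of $S$ and applying Remark~\ref{rem:duo-makes-it-easier}\ref{rem:duo-makes-it-easier(3)} to the duo monoid $\widehat{S}$, I would first record that $\widehat{S}a\widehat{S} = a\widehat{S} = \widehat{S}a$ for every $a \in S$; consequently two elements $a, b \in S$ are associated if and only if $a\widehat{S} = b\widehat{S}$, equivalently $\widehat{S}a = \widehat{S}b$. For part~\ref{prop:red-duo-sgrp(1)} I would then check directly that this relation is a congruence: assuming $a\widehat{S} = a'\widehat{S}$ and $b\widehat{S} = b'\widehat{S}$, the chain $ab\widehat{S} = ab'\widehat{S} = a\widehat{S}b' = a'\widehat{S}b' = a'b'\widehat{S}$ (each step being either a substitution or an application of duoness) gives $ab \sim a'b'$. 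Since associatedness is thus already a congruence, the smallest congruence containing it is associatedness itself, so $S_\red = S/{\sim}$; and $S_\red$ is duo because any surjective homomorphic image of a duo semigroup is duo, as $\pi(a)\,S_\red = \pi(aS) = \pi(Sa) = S_\red\,\pi(a)$.

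Part~\ref{prop:red-duo-sgrp(4)} is then almost immediate. I would consider the map $\Phi \colon S \to \mathfrak{P}(S)$ sending $a$ to $a\widehat{S}$, which by Remarks~\ref{rem:duo-makes-it-easier}\ref{rem:duo-makes-it-easier(4)} and \ref{rem:duo-makes-it-easier}\ref{rem:duo-makes-it-easier(5)} maps onto the set of principal ideals of $S$, that is, onto $\mathfrak{P}(S)$. It is a homomorphism because $(a\widehat{S})(b\widehat{S}) = ab\widehat{S}$ in a duo monoid (Remark~\ref{rem:duo-makes-it-easier}\ref{rem:duo-makes-it-easier(4)}), and its induced kernel congruence is exactly associatedness, since $\Phi(a) = \Phi(b)$ means $a\widehat{S} = b\widehat{S}$. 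The first isomorphism theorem for semigroups then yields $S_\red = S/{\sim} \cong \Phi(S) = \mathfrak{P}(S)$.

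The remaining parts are short computations with representatives. For part~\ref{prop:red-duo-sgrp(2)}, from $\overline{ax} = \overline{ay}$, i.e.\ $ax\widehat{S} = ay\widehat{S}$, I would use that $ax$ lies in both sides to write $ax = ayt$ for some $t \in \widehat{S}$, cancel $a$ on the left to obtain $x = yt$, hence $x\widehat{S} \subseteq y\widehat{S}$, and conclude $x \sim y$ by the symmetric inclusion; right cancellation is handled identically through the description $\widehat{S}a = \widehat{S}b$ of associatedness. For part~\ref{prop:red-duo-sgrp(3)}, applied to the duo monoid $H := \widehat{S}$, I note that $H_\red$ is a monoid with identity $\pi(1_H)$ and that $a \sim 1_H$ holds exactly when $aH = H$, that is, exactly when $a \in H^\times$. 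If $\pi(a)$ is a unit of $H_\red$, then $\pi(ab) = \pi(1_H)$ for some $b$, so $abH = H$ and $abc = 1_H$ for some $c$; Dedekind-finiteness of duo monoids (Remark~\ref{rem:duo-makes-it-easier}\ref{rem:duo-makes-it-easier(6)}) then forces $ab \in H^\times$ and hence $a \in H^\times$, so $\pi(a) = \pi(1_H)$ and $H_\red$ is reduced.

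I do not expect a serious obstacle, since the whole proposition rests on the single observation that duoness collapses two-sided associatedness to the condition $a\widehat{S} = b\widehat{S}$, which is manifestly a congruence. The one point deserving care is part~\ref{prop:red-duo-sgrp(3)}: there one must upgrade ``$\pi(a)$ is a unit'' to ``$a$ is a unit'', not merely to ``$ab$ is a unit'', and this upgrade is exactly what Dedekind-finiteness of duo monoids supplies.
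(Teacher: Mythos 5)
Your proposal is correct and follows essentially the same route as the paper's own proof: the same key observation that duoness reduces associatedness to $a\widehat{S}=b\widehat{S}$, the same congruence computation, the same map $a\mapsto a\widehat{S}$ with the first isomorphism theorem for part~\ref{prop:red-duo-sgrp(4)}, and the same style of argument for parts~\ref{prop:red-duo-sgrp(2)} and \ref{prop:red-duo-sgrp(3)}. The only cosmetic differences are that you carry out explicitly the cancellation step the paper delegates to a cited remark, and in part~\ref{prop:red-duo-sgrp(3)} you invoke Dedekind-finiteness of duo monoids where the paper extracts the inverses directly from duoness; both are sound.
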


\begin{proof}
Denote by $\simeq_S$ the relation of associatedness in $S$. We will repeatedly use, without further comment, that the unitization of a duo semigroup is a duo monoid, and thus, by  Remark \ref{rem:duo-makes-it-easier}\ref{rem:duo-makes-it-easier(3)}, two elements $a, b \in S$ are associated (that is, $a \simeq_S b$) if and only if $a\widehat{S}  = b\widehat{S}$, where $\widehat{S}$ is a unitization of $S$.

\ref{prop:red-duo-sgrp(1)} To begin, assume $a \simeq_S c$ and $b \simeq_S d$, that is, $a\widehat{S} = c\widehat{S}$ and $b\widehat{S} = d\widehat{S}$. It follows that $
ab\widehat{S} = ad\widehat{S} = a\widehat{S}d = b\widehat{S}d = bd\widehat{S}$, i.e., $ab \simeq_S cd$. Consequently, $\simeq_S$ is a congruence on $S$ and $S_\red$ is the quotient of $S$ by $\simeq_S$. 

Next, let $\pi$ be the canonical projection of $S$ onto $S_\red$, and let $x \in S$. Since $S$ is duo, we have that, for each $y \in S$, there exists $z \in S$ such that $xy = zx$ and hence $\pi(x) \pi(y) = \pi(xy) = \pi(zx) = \pi(z) \pi(x)$. This ultimately means that $\pi(x) S_\red \subseteq S_\red \pi(x)$, and the reverse inclusion is proved symmetrically. So, $S_\red$ is duo.

\ref{prop:red-duo-sgrp(2)} Suppose $S$ is cancellative and fix $a, b, c \in S$. Since $\pi$ is a homomorphism, it is clear that $\pi(a)\pi(b) = \allowbreak \pi(a) \allowbreak \pi(c)$ if and only if $ab \simeq_S ac$ (by our definitions, two elements of $S$ have the same image under $\pi$ if and only if they are associated), if and only if $ab\widehat{S} = ac\widehat{S}$. So, noting that $\widehat{S}$ is cancellative (by the hypothesis that $S$ is), it is immediate (see \cite[Remark 2.1]{Tri-2023(c)}) that $b\widehat{S} = c\widehat{S}$ and hence $\pi(b) = \pi(c)$. Symmetrically, $\pi(b)\pi(a) = \allowbreak \pi(c) \allowbreak \pi(a)$ if and only if $\pi(b) = \pi(c)$. Therefore, we conclude that $S_\red$ is cancellative.

\ref{prop:red-duo-sgrp(3)} 
We may assume without loss of generality that $S$ is a monoid (and thus $S = \widehat{S}$). Then, $S_\red$ is itself a monoid (rather than just a semigroup), its identity being the class $\pi(1_S)$ of the identity $1_S$ of $S$. In fact, we have $\pi(1_S) \pi(x) = \pi(x) = \pi(x) \pi(1_S)$ for all $x \in S$.

It remains to check that $S_\red$ is reduced. Suppose that $\pi(a) \pi(b) = \pi(1_S)$ for some $a, b \in S$, and hence $abS = \allowbreak S$. Since $ab \in bS$ (by the duoness of $S$), there exist $x, y \in S$ such that $1_S = ax = by$. It follows that $S \subseteq axS \subseteq aS \subseteq S$ (i.e., $S = aS$) and, similarly, $S = bS$. This means that $a \simeq_S 1_S \simeq_S b$, implying $\pi(a) = \pi(1_S) = \pi(b)$ and ultimately proving that $S_\red$ is indeed a reduced monoid.

\ref{prop:red-duo-sgrp(4)} By Remark \ref{rem:duo-makes-it-easier}\ref{rem:duo-makes-it-easier(5)}, the principal ideals of $S$ are the sets of the form $a\widehat{S}$ with $a \in S$. So, by Remark \ref{rem:duo-makes-it-easier}\ref{rem:duo-makes-it-easier(4)}, the function $f \colon S \to \mathfrak{P}(S), a \mapsto a\widehat{S}$ is well-defined and surjective. Moreover, $f$ is a semigroup homomorphism $S \to \mathfrak P(S)$, because $
f(ab) = ab\widehat{S} = ab\widehat{S}^{\,2} = a\widehat{S} b \widehat{S} = f(a) f(b)$ for all $a, b \in S$.
On the other hand, the set
\[
\ker(f) := \{(a,b) \in S \times S: f(a) = f(b)\} = \{(a,b) \in S \times S: a\widehat{S} = b \widehat{S}\}
\]
is nothing but the (graph of the) relation of associatedness on $S$. Hence, we conclude from \cite[Proposition 3.3(3)]{Gri-95} that $S_\red$ (i.e., the quotient of $S$ by $\simeq_S$) is isomorphic to $\mathfrak P(S)$ (i.e., the image of $f$).
\end{proof}

Before proceeding further, we show in the upcoming remark how the notion of associatedness employed in this work aligns with the classical notion found in the literature on cancellative commutative monoids; and we use this observation to work out a construction (Example \ref{exa:more-examples}) that will be useful towards the end of Section \ref{sec:back-to-power-sgrps}.

\begin{remark}
\label{rem:associated-in-unit-canc-duos}
Let $H$ be a duo monoid that satisfies $xy \ne x \ne yx$ for all $x, y \in H$ with $y \notin H^\times$; such a property is commonly referred to as \evid{unit-cancellativity}, and it is evidently verified by any cancellative monoid. We claim that two elements $a, b \in H$ are associated if and only if $b \in aH^\times$, if and only if $b \in H^\times a$ (in particular, this gives $aH^\times = H^\times a$). By symmetry, it is enough to prove the first biconditional.

It is clear that if $a = bu$ for some $u \in H^\times$, then $b = au^{-1}$ and hence $a \simeq_H b$. So, let us assume $a \simeq_H b$. By Remark \ref{rem:duo-makes-it-easier}\ref{rem:duo-makes-it-easier(3)}, this implies $aH = bH$, i.e., there exist $c, d \in S$ with $ac = b$ and $a = bd$. It follows that $a = \allowbreak bd = \allowbreak acd$, which, by the unit-cancellativity of $H$ and Remark \ref{rem:duo-makes-it-easier}\ref{rem:duo-makes-it-easier(3)}, shows  that $c, d \in H^\times$ (as wished).

Accordingly, we get from Proposition~\ref{prop:red-duo-sgrp}\ref{prop:red-duo-sgrp(4)} that if $S$ is a duo semigroup and its unitization $\widehat{S}$ is reduced and unit-cancellative (as a monoid), then two elements in $S$ are associated if and only if they are equal, with the result that $S$ is isomorphic to the reduced quotient $S_\red$ via the map $x \mapsto \{x\}$.
\end{remark}

\begin{example}\label{exa:more-examples}
Let $H$ be a unit-cancellative duo monoid and fix $a \in H$. We claim $aK = Ka$, where $K := H \setminus H^\times$. It will follow that the non-units of $H$ form a unit-cancellative duo semigroup (which, in particular, is the empty semigroup when $H = H^\times$), as we know from Remark \ref{rem:duo-makes-it-easier}\ref{rem:duo-makes-it-easier(6)} that $HK \subseteq K$.

By symmetry, it suffices to show that $aK \subseteq Ka$. For, let $x \in K$. Since $H$ is duo, there exists $y \in H$ such that $ax = ya$. We need to check that $y$ is a non-unit. Suppose to the contrary that $y$ is a unit. Then, from Remark \ref{rem:associated-in-unit-canc-duos}, we gather that $ax = az$ for some $z \in H^\times$, which implies $axz^{-1} = a$ and hence, by the unit-cancellativity of $H$, $x \in H^\times$. This is, however, absurd (because $x$ is a non-unit of $H$) and proves our claim.

Now, suppose $a \in K$ and assume that, in addition to being unit-cancellative and duo, $H$ is also Archimedean (resp., strongly Ar\-chi\-me\-de\-an). By definition and Remark \ref{rem:duo-makes-it-easier}\ref{rem:duo-makes-it-easier(3)}, there is an integer $n \ge 1$ such that $b^n \in aH$ for every non-unit $b \in H$ (resp., $b_1 \cdots b_n \in aH$ for all non-units $b_1, \ldots, b_n \in H$). So, recalling from the above that $HK \subseteq K$, we obtain that $b^{n+1} \in \allowbreak aHb \subseteq aK$ for every $b \in K$ (resp., $b_1 \cdots b_{n+1} \in aHb_{n+1} \subseteq aK$ for all $b_1, \ldots, b_{n+1} \in K$). In other words, $K$ is an Archimedean (resp., strongly Archimedean) semigroup without identity, and it is therefore immediate to check that any unitization of $K$ is an Archimedean (resp., strongly Archimedean), unit-cancellative, duo monoid with trivial group of units. Furthermore, it is obvious that $K$ is cancellative if and only if $H$ is.
\end{example}

A \evid{divisor-closed} subsemigroup of a semigroup $S$ is a subsemigroup $T$ of $S$ such that if $x \mid_S y$ and $y \in T$, then $x \in T$; in this case, we also say that the subsemigroup $T$ is divisor-closed in $S$. Divisor-closed sub\-monoids are defined in an analogous way. Any divisor-closed subsemigroup of a monoid $H$ contains the group of units $H^\times$ and hence the identity $1_H$, making it a sub\-monoid (rather than just a subsemigroup). So, for instance, a monoid is Dedekind-finite if and only if its group of units is a divisor-closed submonoid.

We denote by $\llb X \rrb_S$ the intersection of all divisor-closed subsemigroups of the semigroup $S$ that contain a set $X \subseteq S$, and for each $x \in S$ we write $\llb x \rrb_S$ in place of $\llb \{x\} \rrb_S$. It is routine that $\llb X\rrb_S$ is itself a divisor-closed subsemigroup of $S$ (containing $X$). Moreover, divisor-closedness is preserved under isomorphisms:

\begin{lemma}\label{lem:dc-under-isomorphism}
Let $f \colon M\to N$ be a semigroup isomorphism. If $H$ is a divisor-closed subsemigroup of $M$, then $f[H]$ is a divisor-closed subsemigroup of $N$.
\end{lemma}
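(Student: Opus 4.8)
The plan is to reduce the statement to a single claim: the divisibility preorder is invariant under isomorphism, in the sense that $a \mid_M b$ if and only if $f(a) \mid_N f(b)$ for all $a, b \in M$. Granting this, the rest is routine. First I would note that $f[H]$ is a subsemigroup of $N$, since $f$ is a homomorphism and $H$ is closed under the operation of $M$, so that $f[H]\,f[H] = f[HH] \subseteq f[H]$. For divisor-closedness, I would take $y' \in f[H]$ and $x' \in N$ with $x' \mid_N y'$, write $y' = f(y)$ for some $y \in H$, and use the surjectivity of $f$ to pick $x \in M$ with $f(x) = x'$; the invariance claim then yields $x \mid_M y$, and the divisor-closedness of $H$ forces $x \in H$, whence $x' = f(x) \in f[H]$.

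The heart of the argument is thus the invariance of $\mid$, and the key observation is that $f$ extends to an isomorphism $\widehat{f} \colon \widehat{M} \to \widehat{N}$ of unitizations with $\widehat{f}|_M = f$. If $M$ is already a monoid, then so is $N$, and $f$ is automatically a monoid isomorphism by Remark \ref{rem:morphisms}, so $\widehat{f} = f$ works; otherwise $N$ is not a monoid either, and one lets $\widehat{f}$ agree with $f$ on $M$ and send the adjoined identity of $\widehat{M}$ to that of $\widehat{N}$, which is readily checked to be an isomorphism. With $\widehat{f}$ in hand, I would verify, for $a, b \in M$, the chain $a \mid_M b$ iff $b \in \widehat{M} a \widehat{M}$ iff $\widehat{f}(b) \in \widehat{f}[\widehat{M} a \widehat{M}]$ iff $f(b) \in \widehat{N}\, f(a)\, \widehat{N}$ iff $f(a) \mid_N f(b)$. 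The first equivalence is the definition of $\mid_M$; the second uses that $\widehat{f}$ is a bijection; and the third rests on the fact that a homomorphism respects setwise products, giving $\widehat{f}[\widehat{M} a \widehat{M}] = \widehat{f}[\widehat{M}]\,\widehat{f}(a)\,\widehat{f}[\widehat{M}]$, together with the surjectivity of $\widehat{f}$ (so that $\widehat{f}[\widehat{M}] = \widehat{N}$) and the equalities $\widehat{f}(a) = f(a)$, $\widehat{f}(b) = f(b)$, which hold because $a, b \in M$.

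I expect the only delicate point to be the clean handling of the unitizations: namely, confirming that $\widehat{f}$ is well-defined and an isomorphism in both the monoid and the non-monoid cases, and carefully distinguishing divisibility computed in $\widehat{M}$ from that computed in $\widehat{N}$. Once the passage to the unitization level is made, every remaining step is a direct verification.
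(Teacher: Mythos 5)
Your proof is correct, but it takes a noticeably different route from the paper's. The paper argues directly at the level of elements and never touches the unitizations: it assumes $ab = f(h) \in f[H]$ with $a, b \in N$, uses surjectivity to write $a = f(x)$ and $b = f(y)$, deduces $xy = h$ from injectivity, and then invokes the divisor-closedness of $H$ (since $x \mid_M h$ and $y \mid_M h$) to get $x, y \in H$, i.e.\ $a, b \in f[H]$. Divisibility with possible identity factors is thereby handled only implicitly, via the reduction of the relation $y' \in \widehat{N}x'\widehat{N}$ to two-factor products of elements of $N$ (splitting $ux'v$ as $(ux')v$ and applying the two-factor case twice). You instead isolate and prove the stronger, reusable statement that any isomorphism transports the divisibility preorder, $a \mid_M b \Leftrightarrow f(a) \mid_N f(b)$, which obliges you to extend $f$ to an isomorphism $\widehat{f}$ of unitizations and to distinguish the monoid and non-monoid cases; your case analysis is sound (an isomorphic image of a monoid is a monoid, and Remark~\ref{rem:morphisms} settles the identification $\widehat{f} = f$ when $M$ is a monoid). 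What your route buys is a clean, explicit treatment of the identity factors built into the definition of $\mid$, together with an invariance lemma that could be cited elsewhere in the paper (e.g., it immediately gives that images of divisor-closed sets are divisor-closed for any such $f$). What the paper's route buys is brevity: no unitization bookkeeping and no case distinction, at the cost of leaving the reduction to two-factor products unstated.
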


\begin{proof}
We only check that $f[H]$ is a divisor-closed subsemigroup of $N$, for it is a basic fact that a sem\-i\-group homomorphism maps subsemigroups to subsemigroups (see, e.g., Proposition 3.3(1) in \cite{Gri-95}). 
    
Assume $ab = f(h) \in f[H]$ for some $a, b \in N$ and $h \in H$. Since $f$ is surjective, there exists $a, b \in M$ such that $a = f(x)$ and $b = f(y)$. It follows that $f(xy) = f(x) f(y) = ab = f(h)$, implying by the injectivity of $f$ that $xy = h \in H$. Consequently, we obtain by the divisor-closedness of $H$ in $M$ that $x, y \in H$. But this means that $a, b \in f[H]$, showing that $f[H]$ is divisor-closed in $N$. 
\end{proof}

The next lemma provides an iterative construction of divisor-closed subsemigroups and will come in handy in the proof of Lemma \ref{lem:els-principal-div-closed} (we could not find any reference in the literature, so we include a proof).

\begin{lemma}
\label{lem:desc-div-closed-submonoid}
Let $X$ be a non-empty subset of a semigroup $S$. Define $D_0(X) := X$ and, for each $n \in \mathbb N$, let $D_{n+1}(X)$  be the subsemigroup of $S$ generated by the set $\{x \in S: x \mid_S y, \text{ for some }y \in D_n(X)\}$. Then, we have
\begin{equation}\label{equ:divisor-closed-subsgrp}
    \llb X\rrb_S=\bigcup_{n\in \mathbb{N}} D_n(X).
\end{equation}
\end{lemma}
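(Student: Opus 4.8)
The plan is to prove the set equality in \eqref{equ:divisor-closed-subsgrp} by establishing the two inclusions separately. Write $U := \bigcup_{n \in \mathbb N} D_n(X)$ for the right-hand side. For the inclusion $U \subseteq \llb X \rrb_S$, I would argue by induction on $n$ that $D_n(X) \subseteq \llb X \rrb_S$ for every $n \in \mathbb N$. The base case $D_0(X) = X \subseteq \llb X \rrb_S$ holds by definition. For the inductive step, suppose $D_n(X) \subseteq \llb X \rrb_S$; since $\llb X \rrb_S$ is divisor-closed, every $x$ with $x \mid_S y$ for some $y \in D_n(X) \subseteq \llb X \rrb_S$ already lies in $\llb X \rrb_S$, and since $\llb X \rrb_S$ is a subsemigroup, it contains the subsemigroup generated by all such $x$, which is precisely $D_{n+1}(X)$. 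Taking the union over $n$ gives $U \subseteq \llb X \rrb_S$.

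For the reverse inclusion $\llb X \rrb_S \subseteq U$, the key step is to show that $U$ is itself a divisor-closed subsemigroup of $S$ containing $X$, for then minimality of $\llb X \rrb_S$ (as the intersection of all such subsemigroups) forces $\llb X \rrb_S \subseteq U$. That $X = D_0(X) \subseteq U$ is immediate. To see that $U$ is a subsemigroup, take $a, b \in U$, say $a \in D_m(X)$ and $b \in D_n(X)$ with, without loss of generality, $m \le n$; I would first check the chain $D_m(X) \subseteq D_{m+1}(X)$, which follows because every $y \in D_m(X)$ satisfies $y \mid_S y$ (as $y \in \widehat S y \widehat S$), so $y$ belongs to the generating set of $D_{m+1}(X)$. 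Hence $D_m(X) \subseteq D_n(X)$, both $a, b \in D_n(X)$, and $ab \in D_n(X) \subseteq U$ because each $D_n(X)$ is a subsemigroup.

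It remains to verify that $U$ is divisor-closed, and this is the one step that requires a little care. Suppose $x \mid_S y$ with $y \in U$, say $y \in D_n(X)$. Then $x$ belongs to the generating set of $D_{n+1}(X)$ by the very definition of $D_{n+1}(X)$, so $x \in D_{n+1}(X) \subseteq U$. Thus $U$ is divisor-closed, completing the proof that $U$ is a divisor-closed subsemigroup containing $X$, and hence $\llb X \rrb_S \subseteq U$. Combining the two inclusions yields \eqref{equ:divisor-closed-subsgrp}.

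**The main obstacle** I anticipate is purely bookkeeping rather than conceptual: one must be careful to distinguish the generating \emph{set} $\{x \in S : x \mid_S y \text{ for some } y \in D_n(X)\}$ from the subsemigroup $D_{n+1}(X)$ it generates, and to use the reflexivity of $\mid_S$ to secure the monotonicity $D_n(X) \subseteq D_{n+1}(X)$ that makes the union behave well under multiplication. The divisor-closedness of $U$ works cleanly precisely because a divisor of an element of $D_n(X)$ lands in the generating set for the \emph{next} stage, so no single $D_n(X)$ need be divisor-closed on its own — it is the passage to the union that closes the construction.
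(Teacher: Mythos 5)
Your proof is correct and follows essentially the same route as the paper's: both directions are handled identically, with the inclusion $\bigcup_n D_n(X) \subseteq \llb X\rrb_S$ proved by induction using the divisor-closedness and subsemigroup properties of $\llb X\rrb_S$, and the reverse inclusion obtained by showing the union is itself a divisor-closed subsemigroup containing $X$ and invoking minimality. If anything, your write-up is slightly more complete, since you justify the monotonicity $D_n(X) \subseteq D_{n+1}(X)$ via reflexivity of $\mid_S$, a point the paper uses without comment.
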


\begin{proof} 
Set $N := \bigcup_{n \in \mathbb{N}} D_n(X)$. It is fairly obvious that $D_0(X) = X \subseteq \llbracket X \rrbracket_S$. If, on the other hand, $D_n(X) \subseteq \llbracket X \rrbracket_S$ for some $n \in \mathbb N$, then the set $\Lambda_n := \{x \in S: x \mid_S y, \text{ for some } y \in D_n(X)\}$ is a subset of $\llbracket X \rrbracket_S$ (because every $y \in \allowbreak D_n(X)$ is an element of $\llbracket X \rrbracket_S$ and $\llbracket X \rrbracket_S$ is divisor-closed in $S$). So, $\llbracket X \rrbracket_S$ contains the subsemigroup of $S$ generated by $\Lambda_n$ (by the fact that $\llbracket X \rrbracket_S$ contains $\Lambda_n$ and is itself a subsemigroup), that is, $\llbracket X \rrbracket_S$ contains $D_{n+1}(X)$. Therefore, we see by induction that $X \subseteq N \subseteq \llbracket X \rrbracket_S$. It remains to prove that $\llbracket X \rrbracket_S \subseteq N$; and to this end, it is enough to establish that $N$ is a divisor-closed subsemigroup of $S$, as $\llbracket X \rrbracket_S$ is the intersection of \textit{all} divisor-closed subsemigroups containing $X$.

To begin, let $a, b \in N$. There then exist $i, j \in \mathbb{N}$ such that $a \in D_i(X)$ and $b \in D_j(X)$. Putting $m := \max\{i, j\}$ and considering that $D_k(X) \subseteq D_{k+1}(X)$ for all $k \in \mathbb N$, it follows that $a, b \in D_m(X)$. Since $D_m(X)$ is a subsemigroup of $S$, this implies $ab \in D_m(X) \subseteq N$ and ultimately shows that $N$ is itself a subsemigroup.

Next, let $x \in N$. Then, $x \in D_k(X)$ for some $k \in \mathbb N$. It is thus clear from our definitions that $y \mid_S x$ implies $y \in D_{k+1}(X) \subseteq N$, and we conclude from here that $N$ is divisor-closed in $S$.
\end{proof}

Roughly speaking, our goal in the present work is to prove that, in some circumstances, one can recover a monoid $H$, modulo units and up to isomorphism, from either $\mathfrak I(H)$ or $\mathfrak I_\fin(H)$. Essential to this end is the following lemma, which plays a key role in the proofs of our main results (Theorems \ref{th:isom-ideals-sp} and \ref{thm:isom-ideals-fg-primary}).

\begin{lemma}\label{lem:pi-dc-general}
If $S$ is a cancellative, duo semigroup, then $\mathfrak{P}(S)$ is a divisor-closed subsemigroup of $\mathfrak{I}(S)$.
\end{lemma}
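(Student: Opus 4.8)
The plan is to lean on the explicit description of ideals in a duo semigroup. By Remark~\ref{rem:duo-makes-it-easier}\ref{rem:duo-makes-it-easier(5)}, the principal ideals of $S$ are exactly the sets $a\widehat{S}$ with $a \in S$, and by Remark~\ref{rem:duo-makes-it-easier}\ref{rem:duo-makes-it-easier(4)} these constitute $\mathfrak{P}(S)$, which is therefore already a subsemigroup of $\mathfrak{I}(S)$. So the entire content of the lemma is divisor-closedness: I must show that if $I \in \mathfrak{I}(S)$ and $J \in \mathfrak{P}(S)$ satisfy $I \mid_{\mathfrak{I}(S)} J$, then $I$ itself is principal. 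Unwinding the divisibility preorder on $\mathfrak{I}(S)$, this means $J = a\widehat{S}$ for some $a \in S$ and $a\widehat{S} = P\,I\,Q$ for some $P, Q$ in a unitization of $\mathfrak{I}(S)$, where a factor equal to the adjoined identity is simply omitted.

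Next I would extract a single generator from this factorization. Since $a \in a\widehat{S} = P\,I\,Q$, there exist $u, v \in \widehat{S}$ and $x \in I$ with $a = uxv$; here $u$ (resp.\ $v$) is an element of $P \subseteq S$ (resp.\ $Q \subseteq S$) when that factor is a genuine ideal, and $u = 1_{\widehat{S}}$ (resp.\ $v = 1_{\widehat{S}}$) when it is the adjoined identity. The claim is then that $I = x\widehat{S}$, which exhibits $I$ as principal. One inclusion is immediate: as $I$ is an ideal of the monoid $\widehat{S}$ (Remark~\ref{rem:duo-makes-it-easier}\ref{rem:duo-makes-it-easier(5)}), we have $I\widehat{S} = I$, so $x \in I$ gives $x\widehat{S} \subseteq I$.

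The heart of the argument, and the step I expect to be the most delicate, is the reverse inclusion $I \subseteq x\widehat{S}$, which is exactly where cancellativity and duoness must interact. Given an arbitrary $y \in I$, the element $uyv$ lies in $P\,I\,Q = a\widehat{S}$, so $uyv = aw = uxvw$ for some $w \in \widehat{S}$. Left-cancelling $u$ (legitimate since $\widehat{S}$ is cancellative) yields $yv = xvw$. I would then invoke the duoness of $\widehat{S}$ to rewrite $vw = tv$ for a suitable $t \in \widehat{S}$, using $v\widehat{S} = \widehat{S}v$, so that $yv = x(vw) = (xt)v$; right-cancelling $v$ gives $y = xt \in x\widehat{S}$. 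This maneuver of sliding $w$ across $v$ via $v\widehat{S} = \widehat{S}v$, sandwiched between the two cancellations, is the crux: it is what collapses a genuinely two-sided divisibility in $\mathfrak{I}(S)$ into a one-sided divisibility $x \mid_{\widehat{S}} y$ at the level of elements. Once $I \subseteq x\widehat{S}$ is established, combining it with $x\widehat{S} \subseteq I$ proves $I = x\widehat{S} \in \mathfrak{P}(S)$, and hence that $\mathfrak{P}(S)$ is divisor-closed in $\mathfrak{I}(S)$.
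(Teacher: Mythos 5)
Your proof is correct and takes essentially the same route as the paper's: pick a generator of the principal ideal, write it as a product $a = uxv$ with $x \in I$, then for an arbitrary $y \in I$ observe that $uyv \in a\widehat{S}$, slide the extra cofactor across via duoness ($v\widehat{S} = \widehat{S}v$), and cancel to conclude $I = x\widehat{S}$. The only (cosmetic) difference is organizational: the paper phrases the argument as ``a product $IJ$ of two ideals is principal only if both factors are,'' which needs a single right-cancellation and handles three-factor sandwiches by iteration, whereas you treat the sandwich $PIQ$ in one pass using cancellation on both sides.
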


\begin{proof}
We have already noted (and more than once) that the set $\mathfrak P(S)$ of principal ideals
of $S$ is a subsemigroup of the ideal semigroup $\mathfrak I(S)$. So, we only need to check that $\mathfrak{P}(S)$ is divisor-closed in $\mathfrak{I}(S)$. 

For, fix a unitization $\widehat{S}$ of $S$, let $I, J \in \mathfrak{I}(S)$, and assume $IJ \in \mathfrak{P}(S)$. By Remark \ref{rem:duo-makes-it-easier}\ref{rem:duo-makes-it-easier(5)}, this means that  
$IJ = a\widehat{S} = \{a\} \cup aS$ for some $a \in S$, implying that there exist $x \in I$ and $y \in J$ with $a = xy$. 
Let $z \in I$. Then, $zy \in a\widehat{S}$ and hence $zy = ab = xyb$ for a certain $b \in \widehat{S}$. As $S$ is duo, there exists $c \in S$ such that $yb = cy$. It follows that $zy = xcy$, and cancelling out $y$, we obtain $z = xb \in x\widehat{S}$. Therefore, $I = x\widehat{S} \in \mathfrak P(S)$ and, in a similar way, $J \in \mathfrak P(S)$. So, $\mathfrak P(S)$ is a divisor-closed subsemigroup of $\mathfrak I(S)$. 
\end{proof}

\section{Main results}
\label{sec:3}

In this section, we prove the main results of the paper (Theorems \ref{th:isom-ideals-sp} and \ref{thm:isom-ideals-fg-primary}). 
We start with a characterization (Proposition \ref{prop:awesome-strongly-primary}) of strongly Archimedean, duo monoids that may be of independent interest. The reader may want to review Remark \ref{rem:duo-makes-it-easier}\ref{rem:duo-makes-it-easier(1)} before proceeding.

\begin{lemma}\label{lem:els-principal-div-closed}
The following hold for any two ideals $I$ and $J$ of a semigroup $S$:
\begin{enumerate}[label=\textup{(\arabic{*})}]
\item\label{lem:els-principal-div-closed(1)} If $I$ divides $J$ in the ideal semigroup $\mathfrak I(S)$ of $S$, then $J \subseteq I$.
\item\label{lem:els-principal-div-closed(2)} If $J\in \llb I\rrb_{\mathfrak{I}(S)}$, then $I^n\subseteq J$ for some $n \in \mathbb N^+$.
\end{enumerate}
\end{lemma}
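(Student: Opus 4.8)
The plan is to handle the two parts in sequence, with part (1) feeding directly into part (2).

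For part (1), the key observation is that ideals absorb products on both sides. Saying that $I$ divides $J$ in $\mathfrak I(S)$ means, by definition of the divisibility preorder, that $J = KIL$ for some $K, L$ in a unitization $\widehat{\mathfrak I(S)}$ of $\mathfrak I(S)$. If either of $K, L$ is the adjoined identity it acts trivially, so it suffices to treat the ``worst'' case in which both $K$ and $L$ are genuine ideals of $S$. Then $K, L \subseteq S$, and since $I$ is an ideal we have $KI \subseteq SI \subseteq I$ and, applying absorption on the right, $(KI)L \subseteq IS \subseteq I$. Hence $J = KIL \subseteq I$, as claimed. The degenerate cases $J = I$, $J = IL$, and $J = KI$ follow from the same absorption inclusions, so no separate argument is needed.

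For part (2), I would combine part (1) with the iterative description of divisor-closed subsemigroups from Lemma \ref{lem:desc-div-closed-submonoid}, namely $\llb I \rrb_{\mathfrak I(S)} = \bigcup_{n \in \mathbb N} D_n(\{I\})$. Consider the property $P(J)$ asserting that $I^m \subseteq J$ for some $m \in \mathbb N^+$; the goal is precisely to show $P$ holds throughout $\llb I \rrb_{\mathfrak I(S)}$. Two closure facts drive the argument. First, $P$ is stable under products: if $I^{m_1} \subseteq J_1$ and $I^{m_2} \subseteq J_2$, then $I^{m_1 + m_2} = I^{m_1} I^{m_2} \subseteq J_1 J_2$. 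Second---and this is where part (1) enters---$P$ passes to divisors: if $K$ divides $J$ in $\mathfrak I(S)$ and $I^m \subseteq J$, then part (1) gives $J \subseteq K$, whence $I^m \subseteq K$. With these in hand the induction is immediate: $P(I)$ holds trivially with $m = 1$, so $P$ holds on $D_0(\{I\}) = \{I\}$; assuming $P$ holds on $D_n(\{I\})$, divisor-stability shows $P$ holds on every divisor of an element of $D_n(\{I\})$, and product-stability then propagates $P$ to the subsemigroup they generate, i.e., to $D_{n+1}(\{I\})$. Taking the union over $n$ yields $P$ on all of $\llb I \rrb_{\mathfrak I(S)}$.

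I expect the only genuinely delicate point to be keeping the directions of the inclusions straight: divisibility in $\mathfrak I(S)$ \emph{reverses} set inclusion (a divisor \emph{contains} its multiples), which is exactly the content of part (1) and the hinge on which the divisor-stability step of part (2) turns. Everything else is bookkeeping with the absorption property of ideals together with the inductive skeleton supplied by Lemma \ref{lem:desc-div-closed-submonoid}.
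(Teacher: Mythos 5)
Your proposal is correct and takes essentially the same approach as the paper: part (1) via the absorption property of ideals (with the same case analysis over the unitization of $\mathfrak I(S)$), and part (2) by induction along the filtration $D_n$ from Lemma \ref{lem:desc-div-closed-submonoid}, where your ``divisor-stability'' and ``product-stability'' facts are exactly the two steps the paper combines when it writes $J = J_1 \cdots J_r$ with each $J_i$ dividing some $I_i \in D_k(I)$ and sums the exponents.
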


\begin{proof}
\ref{lem:els-principal-div-closed(1)} If $I$ divides $J$ in $\mathfrak I(S)$, then either $J = I$ or there exist $H, K \in \mathfrak I(S)$ such that $J = HI$, $J = IK$, or $J = \allowbreak HIK$. In any case, we have $J \subseteq I$, because $HI \subseteq SI \subseteq I$ and $IK \subseteq IS \subseteq I$ (by definition of an ideal).

\ref{lem:els-principal-div-closed(2)} Fix $k \in \mathbb N$ and let $D_k(I)$ be defined as in Lemma~\ref{lem:desc-div-closed-submonoid} (with $\mathfrak I(S)$ in place of $S$ and $I$ in place of $X$). In view of Eq.~\eqref{equ:divisor-closed-subsgrp}, it suffices to show that, for each $J\in D_k(I)$, there is an integer $n \ge 1$ such that $I^n\subseteq J$. 

We argue by induction on $k$. The case $k = 0$ is trivial, as $J\in D_0(I)$ implies $I=J$, allowing us to choose $n=1$. So, suppose the statement holds for $k$ and let us prove it for $k+1$. To this end, let $J\in D_{k+1}(I)$. By the recursive definition of $D_{k+1}(I)$, there exist $r \in \mathbb N^+$, $J_1,\ldots,J_r\in \mathfrak{I}(S)$, and $I_1, \ldots, I_r \in D_k(I)$ such that $J=J_1\cdots J_r$ and $J_1 \mid_{\mathfrak{I}(S)} I_1$, \ldots, $J_r \mid_{\mathfrak{I}(S)} I_r$. Consequently, we gather from item \ref{lem:els-principal-div-closed(1)} that, for each $i \in \{1, \ldots, r\}$, $I_i \subseteq J_i$ and hence, by the inductive hypothesis, $I^{k_i} \subseteq I_i$ for a certain $k_i \in \mathbb N^+$. It follows that $I^n \subseteq I_1\cdots I_r\subseteq J_1\cdots J_r=J$, where $n := k_1+\dots+k_r \in \mathbb N^+$.
\end{proof}

\begin{proposition}
\label{prop:awesome-strongly-primary}
The following conditions are equivalent for a duo monoid $H$:
\begin{enumerate}[label=\textup{(\alph{*})}]
    \item\label{prop:awesome-dc(1)}  $H$ is strongly Archimedean.
    \item\label{prop:awesome-dc(2)} Any non-trivial divisor-closed subsemigroup of $\mathfrak{I}(H)$ contains the set $\mathfrak P(H)$ of principal ideals.
\end{enumerate}
\end{proposition}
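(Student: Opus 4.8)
The plan is to prove the two implications separately, using two simplifications afforded by duoness. First, Remark \ref{rem:duo-makes-it-easier}\ref{rem:duo-makes-it-easier(3)} gives $\widehat{H}a\widehat{H} = HaH = aH$ for every $a \in H$, so strong Archimedeanness may be restated as: for each $a \in H$ there is a $k \in \mathbb{N}^+$ with $K^k \subseteq aH$, where $K := H \setminus H^\times$ is the set of non-units and $K^k$ denotes the set of all products of $k$ non-units. Second, the principal ideals of $H$ are exactly the sets $aH$ (Remark \ref{rem:duo-makes-it-easier}\ref{rem:duo-makes-it-easier(5)}), and these form $\mathfrak P(H)$. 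I would also record at the outset that $H$ is the only unit of the monoid $\mathfrak I(H)$ (if $IJ = H$ then $H = IJ \subseteq I \subseteq H$, since $IJ \subseteq I$ for any ideal $I$), so that every divisor-closed subsemigroup of $\mathfrak I(H)$ contains $H$ and being non-trivial is the same as containing some proper ideal.

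For \ref{prop:awesome-dc(2)} $\Rightarrow$ \ref{prop:awesome-dc(1)}, I would first dispose of the case in which $H$ is a group: then every element is a unit and strong Archimedeanness holds vacuously. Otherwise $K = H \setminus H^\times$ is a proper ideal by Remark \ref{rem:duo-makes-it-easier}\ref{rem:duo-makes-it-easier(6)}, so the divisor-closed subsemigroup $\llb K \rrb_{\mathfrak I(H)}$ is non-trivial and, by hypothesis, contains $\mathfrak P(H)$. Fixing $a \in H$, the principal ideal $aH$ thus lies in $\llb K \rrb_{\mathfrak I(H)}$, and Lemma \ref{lem:els-principal-div-closed}\ref{lem:els-principal-div-closed(2)} delivers an $n \in \mathbb N^+$ with $K^n \subseteq aH$, which by the reformulation above is precisely strong Archimedeanness.

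For \ref{prop:awesome-dc(1)} $\Rightarrow$ \ref{prop:awesome-dc(2)}, let $T$ be a non-trivial divisor-closed subsemigroup of $\mathfrak I(H)$ and fix a proper ideal $J \in T$, noting that $J$ consists of non-units (Remark \ref{rem:duo-makes-it-easier}\ref{rem:duo-makes-it-easier(2)}). Given $a \in H$, strong Archimedeanness supplies a $k \in \mathbb N^+$ with $J^k \subseteq aH$, as $J^k$ is a set of products of $k$ non-units. The key move is to upgrade this containment to a divisibility $aH \mid_{\mathfrak I(H)} J^k$. To this end I would set $L := \{h \in H : ah \in J^k\}$ and verify, using duoness to control left multiples (if $ah \in J^k$ and $g \in H$, then $ag = g'a$ for some $g' \in H$, whence $a(gh) = g'(ah) \in J^k$), that $L$ is an ideal of $H$ with $(aH)L = aL = J^k$ (since $HL = L$). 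Then $aH$ divides $J^k$ in $\mathfrak I(H)$; since $J^k \in T$ (as $T$ is a subsemigroup) and $T$ is divisor-closed, it follows that $aH \in T$. As $a$ ranges over $H$, this yields $\mathfrak P(H) \subseteq T$.

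I expect the divisibility upgrade in the last paragraph to be the main obstacle. Lemma \ref{lem:els-principal-div-closed}\ref{lem:els-principal-div-closed(1)} furnishes only the implication ``divides $\Rightarrow$ contains'', and its converse is false for general ideals; it is recovered here solely because $aH$ is principal and $H$ is duo. The set $L$ plays the role of the right quotient of the ideal $J^k$ by $aH$, and the one delicate point is that $L$ is genuinely two-sided: the inclusion $ah \in J^k$ does not immediately give $a(gh) \in J^k$, and it is precisely duoness that closes this gap.
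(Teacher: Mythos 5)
Your proposal is correct and follows essentially the same route as the paper: your implication \ref{prop:awesome-dc(2)} $\Rightarrow$ \ref{prop:awesome-dc(1)} (via the proper ideal $H \setminus H^\times$ and Lemma \ref{lem:els-principal-div-closed}\ref{lem:els-principal-div-closed(2)}) is exactly the paper's argument, and for \ref{prop:awesome-dc(1)} $\Rightarrow$ \ref{prop:awesome-dc(2)} the paper performs the very same ``divisibility upgrade'' you flag as the main obstacle, writing $I^n = aX$ for some $X \subseteq H$ and using duoness (in the form $XH = HX$) to get the factorization $I^n = (aH)(XH)$ in $\mathfrak I(H)$. Your cofactor $L = \{h \in H : ah \in J^k\}$ is simply the already-saturated (maximal) choice of the paper's $XH$, so the two constructions are equivalent in substance.
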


\begin{proof}
Suppose first that $H$ is strongly Archimedean and let $a \in H$. Then, by definition, there is an integer $n \ge 1$ such that $(H \setminus H^\times)^n \subseteq aH$. Let $M$ be a non-trivial divisor-closed subsemigroup of $\mathfrak{I}(H)$, and let $I \in M$ be a \textit{proper} ideal of $H$. By Remark \ref{rem:duo-makes-it-easier}\ref{rem:duo-makes-it-easier(2)}, we are guaranteed that $I \subseteq H \setminus H^\times$. It follows that $I^n \subseteq aH$ and hence $I^n=aX$ for some $X\subseteq H$. Since $H^2 = H$ and, by Remark \ref{rem:duo-makes-it-easier}\ref{rem:duo-makes-it-easier(3)}, $XH = HX$, we thus find that
\[
M \ni I^n = I^n H = aXH = a(XH)H = a(HX)H = (aH)(XH).
\]
So, considering that $XH$ is an ideal of $H$ and $M$ is divisor-closed in $\mathfrak I(H)$, we obtain that $aH$ is an element of $M$, which ultimately proves that $M$ contains the set of principal ideals of $H$.

For the other implication, assume $H$ is not a group (otherwise the conclusion is obvious) and let $a \in H$. We know from Remark~\ref{rem:duo-makes-it-easier}\ref{rem:duo-makes-it-easier(6)} that $H \setminus H^\times$ is a proper ideal of $H$. Hence, the principal ideal $aH$ is, by hypothesis, an element of  $\llb H \setminus H^\times\rrb_{\mathfrak I(H)}$. It follows by Lemma~\ref{lem:els-principal-div-closed} that $(H \setminus H^\times)^n \subseteq aH$ for some integer $n \ge 1$. Consequently, $H$ is a strongly Archimedean monoid.
\end{proof}

We are now ready to affirmatively answer Question \ref{quest:5}\ref{quest:5(2)} for the class of strongly Archimedean, cancellative, duo monoids. As a by-product, we will also provide a positive answer to Question \ref{quest:3} for the same class.

\begin{theorem}
\label{th:isom-ideals-sp}
If $H$ and $K$ are strongly Archimedean, cancellative, duo monoids, then every isomorphism from $\mathfrak{I}(H)$ to $\mathfrak I(K)$ restricts to an isomorphism from $\mathfrak P(H)$ to $\mathfrak P(K)$. In particular, if $\mathfrak{I}(H)$ is isomorphic to $\mathfrak I(K)$, then $H_{\red}$ is isomorphic to $K_{\red}$.
\end{theorem}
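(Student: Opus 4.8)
The plan is to prove the first (stronger) assertion and then read off the second as an immediate corollary. Concretely, I would show that any isomorphism $\Phi \colon \mathfrak{I}(H) \to \mathfrak{I}(K)$ carries the set $\mathfrak{P}(H)$ exactly onto the set $\mathfrak{P}(K)$; once this set equality is in hand, the restriction of $\Phi$ to $\mathfrak{P}(H)$ is a bijective homomorphism onto $\mathfrak{P}(K)$ whose inverse is the restriction of $\Phi^{-1}$, hence an isomorphism $\mathfrak{P}(H) \to \mathfrak{P}(K)$. The ``in particular'' clause then follows at once: if $\mathfrak{I}(H) \cong \mathfrak{I}(K)$, fix such a $\Phi$, deduce $\mathfrak{P}(H) \cong \mathfrak{P}(K)$, and combine with Proposition~\ref{prop:red-duo-sgrp}\ref{prop:red-duo-sgrp(4)}, which gives $H_\red \cong \mathfrak{P}(H)$ and $K_\red \cong \mathfrak{P}(K)$, to conclude $H_\red \cong K_\red$.

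Before the main argument I would dispose of the degenerate case. If $H$ is a group, then $\mathfrak{I}(H) = \{H\}$ is trivial by Remark~\ref{rem:duo-makes-it-easier}\ref{rem:duo-makes-it-easier(2)}, so $\mathfrak{I}(K)$ is trivial too; since a non-group duo monoid always admits the proper ideal $K \setminus K^\times$ (Remark~\ref{rem:duo-makes-it-easier}\ref{rem:duo-makes-it-easier(6)}), $K$ must likewise be a group, and then $\mathfrak{P}(H) = \{H\} = \mathfrak{I}(H)$ and $\mathfrak{P}(K) = \{K\} = \mathfrak{I}(K)$, so there is nothing to prove. Hence I may assume neither $H$ nor $K$ is a group. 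In that situation each monoid has a non-unit $a$, and the principal ideal $a\widehat{\phantom{x}}$—i.e. $aH$, respectively $aK$—is \emph{proper}: a duo monoid is Dedekind-finite (Remark~\ref{rem:duo-makes-it-easier}\ref{rem:duo-makes-it-easier(6)}), so $aH$ cannot meet $H^\times$, and properness follows from Remark~\ref{rem:duo-makes-it-easier}\ref{rem:duo-makes-it-easier(2)}. Thus both $\mathfrak{P}(H)$ and $\mathfrak{P}(K)$ are \emph{non-trivial}.

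The heart of the proof is a transport argument combining three earlier facts. By Lemma~\ref{lem:pi-dc-general} (applicable since $H$ is cancellative and duo), $\mathfrak{P}(H)$ is a divisor-closed subsemigroup of $\mathfrak{I}(H)$; by Lemma~\ref{lem:dc-under-isomorphism}, its image $\Phi[\mathfrak{P}(H)]$ is a divisor-closed subsemigroup of $\mathfrak{I}(K)$, and it is non-trivial because $\Phi$ is a bijection and $\mathfrak{P}(H)$ has more than one element. As $K$ is strongly Archimedean and duo, Proposition~\ref{prop:awesome-strongly-primary} gives $\mathfrak{P}(K) \subseteq \Phi[\mathfrak{P}(H)]$. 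Symmetrically, applying the same chain of reasoning to the inverse isomorphism $\Phi^{-1}$—so that $\Phi^{-1}[\mathfrak{P}(K)]$ is a non-trivial divisor-closed subsemigroup of $\mathfrak{I}(H)$—and invoking Proposition~\ref{prop:awesome-strongly-primary} for $H$ yields $\mathfrak{P}(H) \subseteq \Phi^{-1}[\mathfrak{P}(K)]$, that is, $\Phi[\mathfrak{P}(H)] \subseteq \mathfrak{P}(K)$. The two inclusions force $\Phi[\mathfrak{P}(H)] = \mathfrak{P}(K)$, as desired.

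I expect the only genuinely delicate point to be the bookkeeping around non-triviality: Proposition~\ref{prop:awesome-strongly-primary} only guarantees that a divisor-closed subsemigroup of an ideal semigroup contains the principal ideals \emph{when that subsemigroup is non-trivial}, so one must first peel off the group case (equivalently, the case of a trivial ideal semigroup) before the characterization can be applied on either side. Beyond this, the proof is a purely formal deployment of the principle ``in a strongly Archimedean duo monoid, $\mathfrak{P}$ is the smallest non-trivial divisor-closed subsemigroup of $\mathfrak{I}$,'' pushed back and forth across $\Phi$ and $\Phi^{-1}$.
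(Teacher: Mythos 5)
Your proposal is correct and follows essentially the same route as the paper: Lemma~\ref{lem:pi-dc-general} and Lemma~\ref{lem:dc-under-isomorphism} to transport $\mathfrak{P}(H)$ and $\mathfrak{P}(K)$ back and forth as divisor-closed subsemigroups, Proposition~\ref{prop:awesome-strongly-primary} to obtain the two inclusions forcing $\Phi[\mathfrak{P}(H)] = \mathfrak{P}(K)$, and Proposition~\ref{prop:red-duo-sgrp}\ref{prop:red-duo-sgrp(4)} for the ``In particular'' clause. If anything, you are \emph{more} careful than the paper on the one delicate point: the paper asserts that $\mathfrak{P}(H)$ is the intersection of \emph{all} divisor-closed subsemigroups of $\mathfrak{I}(H)$, whereas the trivial subsemigroup $\{H\}$ is divisor-closed, so Proposition~\ref{prop:awesome-strongly-primary} really only applies to non-trivial ones; your explicit disposal of the group case and verification that $\mathfrak{P}(H)$, $\mathfrak{P}(K)$, and their images under $\Phi$, $\Phi^{-1}$ are non-trivial supplies exactly the bookkeeping the paper leaves implicit.
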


\begin{proof}
Let $f$ be an isomorphism $\mathfrak{I}(H) \to \mathfrak{I}(K)$. By Lemma \ref{lem:pi-dc-general} and Prop\-o\-si\-tion~\ref{prop:awesome-strongly-primary}, $\mathfrak{P}(H)$ and $\mathfrak{P}(K)$ are the intersection of all divisor-closed subsemigroups of $\mathfrak{I}(H)$ and $\mathfrak{I}(K)$, respectively. On the other hand, since the functional inverse $f^{-1}$ of $f$ is an isomorphism $\mathfrak I(K) \to \mathfrak I(H)$, we have from Lemmas
\ref{lem:pi-dc-general} and \ref{lem:dc-under-isomorphism} that $f[\mathfrak{P}(H)]$ is a divisor-closed subsemigroup of $\mathfrak{I}(K)$ and $f^{-1}[\mathfrak{P}(K)]$ is a divisor-closed subsemigroup of $\mathfrak I(H)$. It follows that $\mathfrak{P}(K) \subseteq f[\mathfrak{P}(H)]$ and $\mathfrak{P}(H) \subseteq f^{-1}[\mathfrak{P}(K)]$, which implies $\mathfrak{P}(K) \subseteq \allowbreak f[\mathfrak{P}(H)] \subseteq \mathfrak{P}(K)$. This finishes the proof, as the ``In particular'' part follows directly from Proposition~\ref{prop:red-duo-sgrp}\ref{prop:red-duo-sgrp(4)}.
\end{proof}

In the remainder of the section, we focus on finitely generated ideals. We begin with a technical result borrowed from \cite{Cos-Tri-2023} and then establish a lemma that will prove useful in two different ways.

\begin{lemma}\label{lem:cos-tri-2023}
Let $H$ be a duo monoid, and let $m, n \in \mathbb N^+$. Then, $Hx_1H \cdots Hx_nH \subseteq x_{\sigma(1)} \cdots  x_{\sigma(m)} H$ for all $x_1, \allowbreak \ldots, \allowbreak x_n \in H$ and every (strictly) increasing function $\sigma \colon \{1, \ldots, m\} \to \{1, \ldots, n\}$.
\end{lemma}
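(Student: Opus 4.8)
The plan is to collapse the left-hand side to a single principal ideal, recast the desired inclusion as a divisibility, and then prove that divisibility by a short induction in which duoness is used to shuffle factors to the right. Since $H$ is duo, Remark~\ref{rem:duo-makes-it-easier}\ref{rem:duo-makes-it-easier(3)} gives $H\{x\}H = xH$ for every $x \in H$, and Remark~\ref{rem:duo-makes-it-easier}\ref{rem:duo-makes-it-easier(4)} (applied with $\widehat{H} = H$) gives $(xH)(yH) = xyH$ for all $x, y \in H$. Iterating the latter identity, I would first record that $Hx_1H \cdots Hx_nH = x_1 \cdots x_n H$, so that the assertion becomes $x_1 \cdots x_n H \subseteq x_{\sigma(1)} \cdots x_{\sigma(m)} H$. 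Because inclusion of right ideals satisfies $aH \subseteq bH$ if and only if $a \in bH$ (forward: $a = a1_H \in aH$; backward: $a = bh$ forces $aH = bhH \subseteq bH$), it suffices to establish the single membership $x_1 \cdots x_n \in x_{\sigma(1)} \cdots x_{\sigma(m)} H$; in words, that the subword $x_{\sigma(1)} \cdots x_{\sigma(m)}$ left-divides the full product. (A strictly increasing $\sigma$ exists only when $m \le n$, which I assume henceforth.)

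I would then argue by induction on $n$. The base $n = 1$ forces $m = 1$ and $\sigma(1) = 1$, so $x_1 \in x_1 H$ is trivial. For the inductive step, peel off the last factor, writing $x_1 \cdots x_n = (x_1 \cdots x_{n-1}) x_n$, and split according to whether $n$ lies in the image of $\sigma$. If $\sigma(m) < n$, then $\sigma$ already maps into $\{1, \ldots, n-1\}$, and the inductive hypothesis yields $x_1 \cdots x_{n-1} = x_{\sigma(1)} \cdots x_{\sigma(m)} h$ for some $h \in H$, whence $x_1 \cdots x_n = x_{\sigma(1)} \cdots x_{\sigma(m)}(h x_n) \in x_{\sigma(1)} \cdots x_{\sigma(m)} H$. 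If $\sigma(m) = n$ and $m \ge 2$, apply the inductive hypothesis to the restriction $\sigma|_{\{1,\ldots,m-1\}} \colon \{1,\ldots,m-1\} \to \{1,\ldots,n-1\}$ to get $x_1 \cdots x_{n-1} = x_{\sigma(1)} \cdots x_{\sigma(m-1)} h$; then use duoness in the form $h x_n \in H x_n = x_n H$, say $h x_n = x_n h'$, to obtain $x_1 \cdots x_n = x_{\sigma(1)} \cdots x_{\sigma(m-1)} x_n h' = x_{\sigma(1)} \cdots x_{\sigma(m)} h'$. Finally, if $\sigma(m) = n$ and $m = 1$, then $x_1 \cdots x_n \in H x_n = x_n H = x_{\sigma(1)} H$ immediately.

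The one genuinely substantive move is the commutation $h x_n = x_n h'$, in which duoness slides the accumulated remainder $h$ to the right of the selected factor $x_n$; this is exactly what lets a non-contiguous subword become a left divisor, and it is the only place the hypothesis on $H$ is really exploited. Everything else is bookkeeping, so I expect the main obstacle to be organizing the case split on whether the top index $n$ is selected so that, in each branch, the inductive hypothesis is applied to a strictly shorter product equipped with a genuinely valid strictly increasing index map.
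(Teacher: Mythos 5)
Your proof is correct. It differs from the paper's, which is a one-line citation: the authors observe that the statement is a special case of Lemma 5.7 in Cossu--Tringali, \emph{Factorization under local finiteness conditions} (J.~Algebra \textbf{630}, 2023), specialized via the identity $aH = Ha$ supplied by duoness. You instead give a self-contained argument: collapse the left-hand side to the principal ideal $x_1 \cdots x_n H$ using $Hx_iH = x_iH$ and $(xH)(yH) = xyH$, recast the inclusion as the single divisibility $x_1 \cdots x_n \in x_{\sigma(1)} \cdots x_{\sigma(m)} H$, and then induct on $n$ with a clean three-way case split on whether the top index $n$ is selected by $\sigma$; the commutation $hx_n = x_nh'$ (duoness again) is indeed the only non-trivial move, and your handling of the boundary cases ($m \le n$ forced by the existence of $\sigma$, the base case, the $m=1$ branch) is complete. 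What the paper's route buys is brevity and reuse of a more general result proved for a wider class of monoids (where two-sided products $Hx_iH$ must be handled without the luxury of collapsing them to $x_iH$); what your route buys is independence from the external reference and a transparent picture of exactly where and how duoness enters, which the citation hides. Either proof would serve the paper's purposes.
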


\begin{proof}
This is a special case of \cite[Lemma 5.7]{Cos-Tri-2023}, using that $aH = Ha$ for all $a \in H$ (by the duoness of $H$).
\end{proof}

\begin{lemma}\label{lem:primary-power-fg-included-pi}
    Let $H$ be an Archimedean, duo monoid and $I$ be a finitely generated ideal of $H$ with $I \ne H$. Then, for all $a \in H$, there exists an integer $k \ge 1$ such that $I^k\subseteq aH$. 
\end{lemma}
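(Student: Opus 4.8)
The plan is to reduce the desired containment to a uniform statement about finite products of generators, and then to close the gap with the Archimedean hypothesis and a pigeonhole count. Since $H$ is duo and $I$ is a finitely generated ideal, Remark~\ref{rem:duo-makes-it-easier}\ref{rem:duo-makes-it-easier(3)} lets me fix a finite generating set and write $I = b_1 H \cup \cdots \cup b_m H$ for suitable $b_1, \ldots, b_m \in H$. Because $I \ne H$ is a proper ideal, Remark~\ref{rem:duo-makes-it-easier}\ref{rem:duo-makes-it-easier(2)} tells me that $I$ is disjoint from $H^\times$; and since $b_i = b_i 1_H \in b_i H \subseteq I$, each generator $b_i$ is therefore a non-unit.

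First I would expand $I^k$. Distributing setwise multiplication over unions and using that the product of principal ideals of a duo monoid is again principal (Remark~\ref{rem:duo-makes-it-easier}\ref{rem:duo-makes-it-easier(4)}), I obtain
\[
I^k = \bigcup_{(i_1, \ldots, i_k)} b_{i_1} \cdots b_{i_k} H,
\]
the union ranging over all sequences $(i_1, \ldots, i_k) \in \{1, \ldots, m\}^k$. Thus it suffices to find a single $k$ for which \emph{every} such product $b_{i_1} \cdots b_{i_k} H$ lies in $aH$.

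Next I would bring in the Archimedean property. For each $j$, the element $b_j$ is a non-unit and $HaH = aH$ by duoness (Remark~\ref{rem:duo-makes-it-easier}\ref{rem:duo-makes-it-easier(3)}), so there is an integer $k_j \ge 1$ with $b_j^{k_j} \in aH$, and hence $b_j^{k_j} H \subseteq aH$. Setting $k^\ast := \max_j k_j$ and $k := m k^\ast$, the pigeonhole principle ensures that in any sequence of length $k$ over the alphabet $\{1, \ldots, m\}$ some letter $j$ occurs at least $\lceil k/m \rceil = k^\ast \ge k_j$ times. Keeping exactly $k_j$ of these occurrences gives a strictly increasing selection of indices, so after rewriting $b_{i_1} \cdots b_{i_k} H = H b_{i_1} H \cdots H b_{i_k} H$ (valid since $H b_i H = b_i H$ by duoness), Lemma~\ref{lem:cos-tri-2023} yields
\[
b_{i_1} \cdots b_{i_k} H \subseteq b_j^{k_j} H \subseteq aH.
\]
As the bound $k = m k^\ast$ does not depend on the sequence, taking the union over all sequences gives $I^k \subseteq aH$, as desired.

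The step requiring the most care is the combinatorial bookkeeping rather than any deep idea: I must make sure the subword extracted by Lemma~\ref{lem:cos-tri-2023} is an honest subsequence (so that the associated $\sigma$ is strictly increasing), and that the single threshold $k^\ast = \max_j k_j$ dominates every individual $k_j$, so that whichever letter the pigeonhole produces already appears often enough to invoke the Archimedean bound tailored to that specific letter. Once this alignment is arranged, no further obstacle remains.
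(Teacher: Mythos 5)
Your proof is correct and follows essentially the same route as the paper's: fix a finite generating set of non-units, use the Archimedean property to obtain an exponent $k_j$ for each generator, force some generator to occur often enough in every length-$k$ product by a counting argument, and invoke Lemma~\ref{lem:cos-tri-2023} to extract $b_j^{k_j}$ as a divisor. The only difference is cosmetic: you take $k = m\,k^\ast$ with $k^\ast = \max_j k_j$, whereas the paper uses the sharper threshold $k = k_1 + \cdots + k_n - n + 1$; both suffice.
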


\begin{proof}
Since $I$ is finitely generated, it holds that $I=XH$ for some non-empty finite set $X \subseteq H$; and since $X \subseteq I\neq H$, we gather from Remark \ref{rem:duo-makes-it-easier}\ref{rem:duo-makes-it-easier(2)} that $X \subseteq H \setminus H^\times$. Let $a \in H$ and write $X = \{x_1, \ldots, x_n\}$, where $n := |X|$. 
   
As $H$ is Archimedean, we have that, for every $i \in \{1, \ldots, n\}$, there is an integer $k_i \ge 1$ such that $a \mid_H x_i^{k_i}$. Set $k := k_1 + \cdots + k_n - n + \allowbreak 1$ and fix $x \in X^k$. We have that $x = x_{i_1}\cdots x_{i_k}$ for some $i_1, \ldots, i_k \in \{1, \ldots, n\}$. Let $e_i$ be, for each $i \in \{1, \ldots, n\}$, the number of indices $j \in \{1, \ldots, k\}$ such that $x_i = x_{i_j}$. It is clear that $e_1+\dots+e_n=k$. So, $e_r \ge k_r$ for at least one index $r \in \{1, \ldots, n\}$, or else $e_1 + \cdots + \allowbreak e_n \le k_1 + \cdots + \allowbreak k_n - n < k$ (a contradiction). On the other hand, we get from Lemma \ref{lem:cos-tri-2023} that $x = x_r^{e_r} y$ for a certain $y \in H$. It follows that $a\mid_H x_r^{k_r} \mid_H x_r^{e_r} \mid_H x$ and hence $X^k\subseteq aH$. So, by Remark \ref{rem:duo-makes-it-easier}\ref{rem:duo-makes-it-easier(3)} and a routine induction, $I^k = (XH)^k = X^k H \subseteq aH$.
\end{proof}

It is natural to ask if, under reasonable hypotheses, an Archimedean monoid is strongly Archimedean. A sufficient condition for this to happen is provided by the following proposition, where we say after \cite[Definition 4.1]{Cos-Tri-2023} that a monoid is \evid{finitely generated up to units} if there is a finite set $A \subseteq H$ such that every $x \in \allowbreak H$ factors as a (finite) product of elements in $H^\times A H^\times$. For instance, the monoid $R^\bullet$ considered in Example \ref{exa:strongly-archi-cance-non-commutative-duo} is finitely generated up to units (in addition to being strongly Archimedean, cancellative, and duo).

\begin{proposition}\label{prop:arch-implies-strongly-arch}
A finitely-generated-up-to-units, duo monoid is Archimedean if and only if it is strongly Archimedean. 
\end{proposition}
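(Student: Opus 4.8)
The plan is to observe that ``strongly Archimedean $\Rightarrow$ Archimedean'' is the easy direction, already noted right after the definition of strongly Archimedean (choose all the $b_i$ equal). So I would take $H$ to be a finitely-generated-up-to-units, duo, Archimedean monoid and prove it is strongly Archimedean. Fix $a \in H$. If $a \in H^\times$, then $aH = H$ by Remark \ref{rem:duo-makes-it-easier}\ref{rem:duo-makes-it-easier(2)}, so any product of (one or more) non-units lies in $aH = HaH$ and $k=1$ works. Assume henceforth that $a$ is a non-unit; then $H$ is not a group. Let $A$ be a finite set witnessing that $H$ is finitely generated up to units, and set $A' := A \setminus H^\times = \{a_1, \ldots, a_t\}$. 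Since $a$ is a non-unit, some element of $H$ is a non-unit, so not every generator can lie in $H^\times$ (else every element of $H$, being a product of units, would be a unit); hence $A' \ne \emptyset$. By Archimedeanness, for each $i \in \{1, \ldots, t\}$ choose $k_i \in \mathbb N^+$ with $a \mid_H a_i^{k_i}$, i.e.\ $a_i^{k_i} \in HaH = aH$ by duoness. Put $K := k_1 + \cdots + k_t$.

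The core observation I would establish is that every non-unit $b \in H$ lies in $H a_i H$ for some $i \in \{1, \ldots, t\}$. Writing $b$ as a product of factors from $H^\times A H^\times$, at least one factor must involve a non-unit element of $A$: otherwise each factor would be a product of units, hence a unit, forcing $b \in H^\times$. Isolating such a factor and absorbing the remaining units and generators into $H$ on either side yields $b \in H a_i H$. Consequently, given arbitrary non-units $b_1, \ldots, b_K \in H$, I can select indices $i(1), \ldots, i(K) \in \{1, \ldots, t\}$ with $b_j \in H a_{i(j)} H$, so that, using $H^2 = H$ to merge adjacent copies of $H$,
\[
b_1 \cdots b_K \in H a_{i(1)} H \cdots H a_{i(K)} H .
\]

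Finally I would invoke the pigeonhole principle together with Lemma \ref{lem:cos-tri-2023}. Since $K = \sum_{r} k_r$, some value $r \in \{1, \ldots, t\}$ occurs among $i(1), \ldots, i(K)$ at least $k_r$ times, for otherwise the total count would be at most $\sum_r (k_r - 1) < K$. Letting $\sigma$ be the strictly increasing map onto $k_r$ of the positions carrying $a_r$, Lemma \ref{lem:cos-tri-2023} gives $H a_{i(1)} H \cdots H a_{i(K)} H \subseteq a_r^{k_r} H$; and $a_r^{k_r} \in aH$ forces $a_r^{k_r} H \subseteq aH \cdot H = aH$. Therefore $b_1 \cdots b_K \in aH = HaH$, and $K$ (which depends only on $a$) witnesses the strongly Archimedean condition.

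I expect the one delicate point to be the unit bookkeeping when extracting a non-unit generator from the factorization of each $b_j$, though this is routine given duoness and Dedekind-finiteness (Remark \ref{rem:duo-makes-it-easier}\ref{rem:duo-makes-it-easier(6)}). The main conceptual step is recognizing that Lemma \ref{lem:cos-tri-2023} converts ``$a_r$ appears $k_r$ times, in order'' into ``$a_r^{k_r}$ divides the whole product,'' which is precisely what turns the finitely many Archimedean bounds $k_i$ into a single uniform bound $K$ via pigeonhole.
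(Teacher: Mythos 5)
Your proof is correct, and its engine --- reduce to finitely many non-unit generators, then merge the individual Archimedean exponents into one uniform bound via pigeonhole and Lemma \ref{lem:cos-tri-2023} --- is the same one that powers the paper's argument; but the packaging is genuinely different. The paper first proves the structural fact that the non-units form a finitely generated ideal, $H \setminus H^\times = AH$, by appealing to results of \cite{Cos-Tri-2023} (and to Remark \ref{rem:associated-in-unit-canc-duos} for the identity $AH^\times = H^\times A$), and then quotes Lemma \ref{lem:primary-power-fg-included-pi} with $I = H \setminus H^\times$ as a black box; the pigeonhole argument you wrote out is essentially the proof of that lemma (with the sharper constant $k_1 + \cdots + k_n - n + 1$ in place of your $k_1 + \cdots + k_t$, an immaterial difference). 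Your route is more elementary and self-contained: your ``core observation'' that every non-unit lies in $Ha_iH$ for some $a_i \in A' := A \setminus H^\times$ is equivalent to the paper's $H \setminus H^\times = AH$ (the reverse inclusion $A'H \subseteq H \setminus H^\times$ is automatic from Remark \ref{rem:duo-makes-it-easier}\ref{rem:duo-makes-it-easier(6)}), but you derive it in a few lines straight from the definition of finite generation up to units, with no external citations; notably, you sidestep the paper's appeal to Remark \ref{rem:associated-in-unit-canc-duos}, which as stated assumes unit-cancellativity, a hypothesis not present in the proposition. What the paper's factorization buys in exchange is economy and reuse: Lemma \ref{lem:primary-power-fg-included-pi} is needed anyway (e.g., in Proposition \ref{prop:primary-dc-fg-contains-pi}), so once the non-units are known to form a finitely generated ideal, the proposition follows in one line.
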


\begin{proof}
Let $H$ be an Archimedean, duo monoid (the ``if'' direction is true in general, as already noted in the last part of Section \ref{sect:1}). If $H$ is a group, then we have nothing to do. Otherwise, $H$ being Dedekind-finite (by Remark \ref{rem:duo-makes-it-easier}\ref{rem:duo-makes-it-easier(6)}) and finitely generated up to units (by hypothesis) implies, by \cite[Remarks 2.1(1) and 4.4(1)]{Cos-Tri-2023}, that there is a non-empty \textit{finite} subset $A$ of $H$ such that $H \setminus H^\times = Y \cup Y^2 \cup \cdots$, where the right-hand side is the subsemigroup of $H$ generated by $Y := H^\times A H^\times \subseteq H$. On the other hand, we have from Remark \ref{rem:associated-in-unit-canc-duos} that $AH^\times = H^\times A H^\times = H^\times A$ (note that $H^\times H^\times = H^\times$). It follows (by a routine induction) that
$
Y^n = AH^\times Y^{n-1}$ for all $n \in \mathbb N^+$, where $Y^0 = \{1_H\}$.
So, considering that $H^\times Y = Y$, we obtain 
\[
H \setminus H^\times = AH^\times (Y^0 \cup Y \cup Y^2 \cup \cdots) = AH^\times (Y^0 \cup (H \setminus H^\times))  = A (H^\times \cup (H \setminus H^\times)) = AH.
\]
Therefore, $H \setminus H^\times$ is a finitely generated ideal (because $A$ is, by construction, a finite subset of $H$); and by Lemma \ref{lem:primary-power-fg-included-pi}, we conclude that $H$ is strongly Archimedean.
\end{proof}

The next result is an Archimedean version of the characterization proven in Proposition \ref{prop:awesome-strongly-primary}. A remarkable difference between the two is that the proof of Proposition \ref{prop:awesome-strongly-primary} does not depend on cancellativity.

\begin{proposition}\label{prop:primary-dc-fg-contains-pi}
The following hold for a duo monoid $H$:
\begin{enumerate}[label=\textup{(\arabic{*})}]
    \item\label{prop:primary-dc-fg-contains-pi(1)} If $H$ is Archimedean, then any non-trivial divisor-closed submonoid of $\mathfrak{I}_{\fin}(H)$ contains $\mathfrak{P}(H)$.
    \item\label{prop:primary-dc-fg-contains-pi(2)} The previous implication can be reversed under the additional hypothesis that $H$ is cancellative.
    \end{enumerate}
\end{proposition}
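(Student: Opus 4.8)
The plan is to read Proposition~\ref{prop:primary-dc-fg-contains-pi} as the finitely generated counterpart of Proposition~\ref{prop:awesome-strongly-primary}, with Lemma~\ref{lem:primary-power-fg-included-pi} taking over the role played by the strong Archimedean power bound in the proof of the latter. Accordingly, I would establish the two parts by imitating, respectively, the forward and the reverse direction of the proof of Proposition~\ref{prop:awesome-strongly-primary}, the only genuinely new ingredient being the need to keep every ideal in sight finitely generated, so that all computations stay inside $\mathfrak{I}_\fin(H)$.

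For part \ref{prop:primary-dc-fg-contains-pi(1)}, let $M$ be a non-trivial divisor-closed submonoid of $\mathfrak{I}_\fin(H)$ and fix $a \in H$; I want to show $aH \in M$. Non-triviality yields a proper ideal $I \in M$ (that is, $I \ne H$, the identity of $\mathfrak{I}_\fin(H)$), and Remark~\ref{rem:duo-makes-it-easier}\ref{rem:duo-makes-it-easier(2)} gives $I \subseteq H \setminus H^\times$. Since $H$ is Archimedean, Lemma~\ref{lem:primary-power-fg-included-pi} provides an integer $k \ge 1$ with $I^k \subseteq aH$. Writing $I = XH$ for a finite set $X$ (Remark~\ref{rem:duo-makes-it-easier}\ref{rem:duo-makes-it-easier(3)}), I have $I^k = X^kH$ with $X^k$ finite; picking, for each $z \in X^k \subseteq aH$, an element $y_z \in H$ with $z = ay_z$ and collecting the $y_z$ into a finite set $Y$, I get $aY = X^k$ and hence, by the duo product identity of Remark~\ref{rem:duo-makes-it-easier}\ref{rem:duo-makes-it-easier(4)}, $I^k = X^kH = aYH = (aH)(YH)$. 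As $YH \in \mathfrak{I}_\fin(H)$ and $I^k \in M$, the principal ideal $aH$ divides $I^k$ in $\mathfrak{I}_\fin(H)$, so the divisor-closedness of $M$ forces $aH \in M$. Since $a$ is arbitrary, $\mathfrak{P}(H) \subseteq M$.

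For part \ref{prop:primary-dc-fg-contains-pi(2)}, assume in addition that $H$ is cancellative and that every non-trivial divisor-closed submonoid of $\mathfrak{I}_\fin(H)$ contains $\mathfrak{P}(H)$; I must show $H$ is Archimedean. Fix $a \in H$ and a non-unit $b$ (if none exists, $H$ is Archimedean vacuously). By Remark~\ref{rem:duo-makes-it-easier}\ref{rem:duo-makes-it-easier(6)}, $H$ is Dedekind-finite, so the principal ideal $bH$ is proper; hence $\llb bH \rrb_{\mathfrak{I}_\fin(H)}$ is a non-trivial divisor-closed submonoid of $\mathfrak{I}_\fin(H)$ and, by hypothesis, contains $\mathfrak{P}(H)$, in particular $aH$. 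To invoke Lemma~\ref{lem:els-principal-div-closed}\ref{lem:els-principal-div-closed(2)}, which is phrased for the full ideal semigroup, I would transfer this membership from the closure in $\mathfrak{I}_\fin(H)$ to the closure in $\mathfrak{I}(H)$: the set $\llb bH \rrb_{\mathfrak{I}(H)} \cap \mathfrak{I}_\fin(H)$ is a divisor-closed subsemigroup of $\mathfrak{I}_\fin(H)$ containing $bH$ (divisibility in $\mathfrak{I}_\fin(H)$ entails divisibility in $\mathfrak{I}(H)$, as $\mathfrak{I}_\fin(H)$ is a submonoid of $\mathfrak{I}(H)$), whence $\llb bH \rrb_{\mathfrak{I}_\fin(H)} \subseteq \llb bH \rrb_{\mathfrak{I}(H)}$ and so $aH \in \llb bH \rrb_{\mathfrak{I}(H)}$. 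Lemma~\ref{lem:els-principal-div-closed}\ref{lem:els-principal-div-closed(2)} then gives $(bH)^n \subseteq aH$ for some $n \ge 1$; since $(bH)^n = b^nH$ by duoness, this reads $b^n \in aH = \widehat{H}a\widehat{H}$, i.e., $a \mid_H b^n$, so $H$ is Archimedean.

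The bookkeeping with generating sets in part \ref{prop:primary-dc-fg-contains-pi(1)} is routine; the one point that must be handled with care is the finite generation of the cofactor $YH$, which is precisely what distinguishes this argument from the proof of Proposition~\ref{prop:awesome-strongly-primary} and is made possible by the finite generation of $I$ together with Lemma~\ref{lem:primary-power-fg-included-pi}. In part \ref{prop:primary-dc-fg-contains-pi(2)}, the delicate point is reconciling the divisor-closure computed in $\mathfrak{I}_\fin(H)$ with the one in $\mathfrak{I}(H)$ so that Lemma~\ref{lem:els-principal-div-closed} applies. I expect the cancellativity hypothesis to enter around this passage (compare Lemma~\ref{lem:pi-dc-general}); it would be worth checking whether it is genuinely needed here, since the closure-transfer above appears to deliver the reverse implication without invoking it.
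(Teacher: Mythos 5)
Your proposal is correct, and while part \ref{prop:primary-dc-fg-contains-pi(1)} coincides with the paper's own argument (same use of Lemma~\ref{lem:primary-power-fg-included-pi}, same factorization $I^k = (aH)(XH)$ with a finitely generated cofactor), your part \ref{prop:primary-dc-fg-contains-pi(2)} takes a genuinely different route at the decisive step, and the difference is substantive. The paper works with the closure $\llb bH \rrb_{\mathfrak{I}(H)}$ computed in the \emph{full} ideal semigroup and must transfer it \emph{downward} into $\mathfrak{I}_\fin(H)$ before the hypothesis can be applied; this is exactly where cancellativity enters, since Lemma~\ref{lem:pi-dc-general} is invoked to get $\llb bH \rrb_{\mathfrak{I}(H)} \subseteq \mathfrak{P}(H) \subseteq \mathfrak{I}_\fin(H)$, so that $\llb bH \rrb_{\mathfrak{I}(H)}$ qualifies as a divisor-closed subsemigroup of $\mathfrak{I}_\fin(H)$. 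You instead form the closure $\llb bH \rrb_{\mathfrak{I}_\fin(H)}$ \emph{inside} $\mathfrak{I}_\fin(H)$, where it is automatically a non-trivial divisor-closed submonoid (non-triviality following from Dedekind-finiteness, Remark~\ref{rem:duo-makes-it-easier}\ref{rem:duo-makes-it-easier(6)}), apply the hypothesis there, and then transfer \emph{upward} via the purely formal observation that $\llb bH \rrb_{\mathfrak{I}(H)} \cap \mathfrak{I}_\fin(H)$ is a divisor-closed subsemigroup of $\mathfrak{I}_\fin(H)$ containing $bH$, whence $\llb bH \rrb_{\mathfrak{I}_\fin(H)} \subseteq \llb bH \rrb_{\mathfrak{I}(H)}$ and Lemma~\ref{lem:els-principal-div-closed}\ref{lem:els-principal-div-closed(2)} applies as stated. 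This upward transfer uses nothing beyond the definitions (divisibility in a submonoid implies divisibility in the ambient monoid), so your closing suspicion is vindicated: your argument never uses cancellativity, and it therefore proves a stronger statement than Proposition~\ref{prop:primary-dc-fg-contains-pi}\ref{prop:primary-dc-fg-contains-pi(2)}, namely that the reverse implication holds for \emph{every} duo monoid, putting the finitely generated case on the same footing as Proposition~\ref{prop:awesome-strongly-primary}. What the paper's route buys instead is the explicit containment $\llb bH \rrb_{\mathfrak{I}(H)} \subseteq \mathfrak{P}(H)$ (a sharper structural fact in the cancellative setting), but it is not needed for the equivalence itself.
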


\begin{proof}
\ref{prop:primary-dc-fg-contains-pi(1)} Assume $H$ is Archimedean and let $M$ be a non-trivial divisor-closed subsemigroup of $\mathfrak I_\fin(H)$. Then, fix $a \in H$ and $I \in M$ with $I \ne H$. By Lemma~\ref{lem:primary-power-fg-included-pi}, there is an integer $k \ge 1$ such that $I^k \subseteq \allowbreak aH$. On the other hand, $I^k$ being a finitely generated ideal of $H$ implies by Remark \ref{rem:duo-makes-it-easier}\ref{rem:duo-makes-it-easier(3)} that $I^k = YH$ for some finite $Y \subseteq H$. So, $Y = Y1_H \subseteq YH \subseteq aH$ and hence $Y = aX$ for a certain $X \subseteq H$. In fact, we can choose $X$ to be \textit{finite} (by the finiteness of $Y$). Again by Remark \ref{rem:duo-makes-it-easier}\ref{rem:duo-makes-it-easier(3)}, it follows that 
\[
M \ni I^k=(aX)H=a(XH)H=a(HX)H=(aH)(XH),
\]
where each of $aH$ and $XH$ is itself a finitely generated ideal of $H$. Since $M$ is divisor-closed in $\mathfrak I_\fin(H)$ and $aH$ is a divisor of $I^k$ in $\mathfrak I_\fin(H)$, we can thus conclude that $aH \in M$ and hence $\mathfrak P(H) \subseteq M$.

\ref{prop:primary-dc-fg-contains-pi(2)} Suppose $H$ is cancellative, and let $a \in H$ and $b \in H \setminus H^\times$. By Lemma \ref{lem:pi-dc-general}, $\mathfrak{P}(H)$ is a divisor-closed subsemigroup of $\mathfrak{I}(H)$. Since $bH$ is a principal ideal of $H$ and $\llb bH \rrb_{\mathfrak{I}(H)}$ is, by definition, the intersection of all divisor-closed subsemigroups of $\mathfrak I(H)$ containing $bH$, it is thus clear that 
    \[
    \llb bH \rrb_{\mathfrak{I}(H)}\subseteq \mathfrak{P}(H)\subseteq \mathfrak{I}_\fin(H).
    \]
    So, $\llb bH \rrb_{\mathfrak{I}(H)}$ is a divisor-closed subsemigroup of $\mathfrak I_\fin(H)$, which implies, by hypothesis, that $\llb bH \rrb_{\mathfrak{I}(H)}$ contains $aH$. It follows, by Lemma~\ref{lem:els-principal-div-closed}\ref{lem:els-principal-div-closed(2)} and a routine induction, that $b^nH = (bH)^n \subseteq aH$ for some $n \in \mathbb N^+$. In particular, we have $b^n \in aH$, which ultimately proves that $H$ is Archimedean. 
\end{proof}

We are finally ready to present an Archimedean analogue of Theorem \ref{th:isom-ideals-sp}.

\begin{theorem}\label{thm:isom-ideals-fg-primary}
If $H$ and $K$ are Archimedean, cancellative, duo monoids, then every isomorphism from $\mathfrak{I}_{\fin}(H)$ to $\mathfrak{I}_{\fin}(K)$ restricts to an isomorphism from $\mathfrak{P}(H)$ to $\mathfrak{P}(K)$. In particular, if $\mathfrak{I}_\fin(H)$ is isomorphic to $\mathfrak I_\fin(K)$, then $H_{\red}$ is isomorphic to $K_{\red}$.
\end{theorem}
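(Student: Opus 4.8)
The plan is to mirror, almost verbatim, the proof of Theorem~\ref{th:isom-ideals-sp}, replacing the ideal semigroup $\mathfrak I$ throughout with its submonoid $\mathfrak I_\fin$ and invoking the finitely-generated counterparts of the two ingredients used there. The engine of that earlier argument is that $\mathfrak P(H)$ is simultaneously (i) a divisor-closed subsemigroup of the ambient structure and (ii) contained in every non-trivial divisor-closed subsemigroup of it; together these identify $\mathfrak P(H)$ as the unique smallest non-trivial divisor-closed subsemigroup, an intrinsic feature that any isomorphism must preserve. So my first task is to re-establish (i) and (ii) with $\mathfrak I_\fin$ in place of $\mathfrak I$, and then the endgame is purely formal.

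For (ii), Proposition~\ref{prop:primary-dc-fg-contains-pi}\ref{prop:primary-dc-fg-contains-pi(1)} already delivers exactly what is needed: under the Archimedean hypothesis, every non-trivial divisor-closed submonoid of $\mathfrak I_\fin(H)$ contains $\mathfrak P(H)$, and likewise for $K$. For (i), I would observe that $\mathfrak P(H) \subseteq \mathfrak I_\fin(H) \subseteq \mathfrak I(H)$, that Lemma~\ref{lem:pi-dc-general} makes $\mathfrak P(H)$ divisor-closed in the \emph{larger} monoid $\mathfrak I(H)$, and that divisor-closedness descends to any intermediate subsemigroup: if $x \mid_{\mathfrak I_\fin(H)} y$ then the witnessing factors lie in $\mathfrak I_\fin(H) \subseteq \mathfrak I(H)$, so $x \mid_{\mathfrak I(H)} y$, whence $y \in \mathfrak P(H)$ forces $x \in \mathfrak P(H)$. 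Thus $\mathfrak P(H)$ is divisor-closed in $\mathfrak I_\fin(H)$ as well, and the same holds for $K$.

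With (i) and (ii) in hand, let $f$ be an isomorphism $\mathfrak I_\fin(H) \to \mathfrak I_\fin(K)$. By Lemma~\ref{lem:dc-under-isomorphism}, $f[\mathfrak P(H)]$ is a divisor-closed subsemigroup of $\mathfrak I_\fin(K)$ and $f^{-1}[\mathfrak P(K)]$ is one of $\mathfrak I_\fin(H)$; both are non-trivial, once I discard at the outset the degenerate case in which $H$ (and hence, by the assumed isomorphism, also $K$) is a group, where $\mathfrak I_\fin$ collapses to $\{H\}$ and the statement is vacuous. Applying (ii) in $K$ and in $H$ respectively gives $\mathfrak P(K) \subseteq f[\mathfrak P(H)]$ and $\mathfrak P(H) \subseteq f^{-1}[\mathfrak P(K)]$; pushing the latter through $f$ yields $f[\mathfrak P(H)] \subseteq \mathfrak P(K)$, whence $f[\mathfrak P(H)] = \mathfrak P(K)$ and $f$ restricts to the desired isomorphism $\mathfrak P(H) \to \mathfrak P(K)$. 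The ``In particular'' clause then follows from Proposition~\ref{prop:red-duo-sgrp}\ref{prop:red-duo-sgrp(4)}, which identifies $\mathfrak P(H) \cong H_\red$ and $\mathfrak P(K) \cong K_\red$.

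I do not anticipate a genuine obstacle, since the substance has been front-loaded into Proposition~\ref{prop:primary-dc-fg-contains-pi}. The only two points requiring care are the inheritance of divisor-closedness from $\mathfrak I(H)$ down to $\mathfrak I_\fin(H)$ in step (i)—Lemma~\ref{lem:pi-dc-general} being stated only for the full ideal semigroup—and the bookkeeping needed to exclude the trivial group case, so that the word ``non-trivial'' in Proposition~\ref{prop:primary-dc-fg-contains-pi} genuinely applies to the images $f[\mathfrak P(H)]$ and $f^{-1}[\mathfrak P(K)]$.
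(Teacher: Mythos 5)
Your proposal is correct and is essentially the paper's own argument: the paper proves this theorem by repeating the proof of Theorem~\ref{th:isom-ideals-sp} verbatim with Proposition~\ref{prop:primary-dc-fg-contains-pi} substituted for Proposition~\ref{prop:awesome-strongly-primary}, which is exactly your plan. The two points you flag as needing care---the descent of divisor-closedness of $\mathfrak{P}(H)$ from $\mathfrak{I}(H)$ (Lemma~\ref{lem:pi-dc-general}) down to the intermediate subsemigroup $\mathfrak{I}_{\fin}(H)$, and the dispatching of the degenerate case where $H$ and $K$ are groups so that non-triviality can be invoked---are left implicit in the paper's one-line proof, so your write-up is, if anything, more complete.
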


\begin{proof}
The existence of an isomorphism from $\mathfrak P(H)$ to $\mathfrak P(K)$ is proved in pretty much the same way as we did with  Theorem~\ref{th:isom-ideals-sp}, the only difference being that here we use Proposition~\ref{prop:primary-dc-fg-contains-pi} in place of Proposition~\ref{prop:awesome-strongly-primary}. The ``In particular'' part of the statement is then straightforward from Proposition~\ref{prop:red-duo-sgrp}\ref{prop:red-duo-sgrp(4)}.
\end{proof}

For the next corollaries, we recall that a (\evid{rational}) \evid{Puiseux monoid} is a submonoid of the non-negative rational numbers under addition; in particular, every numerical monoid is a Puiseux monoid. In recent years, these structures have been the subject of intensive research and proved extremely useful in the construction of examples and counterexamples in commutative algebra and related fields, see \cite{Got-Li-2023} and references therein.

\begin{corollary}\label{cor:fin-generated-ideals-in-puiseux-monoids}
If $H$ and $K$ are Puiseux monoids with $\mathfrak I_\fin(H)$ isomorphic to $\mathfrak I_\fin(K)$, then $H$ is isomorphic to $K$.
\end{corollary}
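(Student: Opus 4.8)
The plan is to show that every Puiseux monoid satisfies the four hypotheses of Theorem~\ref{thm:isom-ideals-fg-primary} and that, for a reduced monoid, passing to the reduced quotient costs nothing up to isomorphism; the corollary then drops out by chaining isomorphisms. Throughout I would write the operation of a Puiseux monoid additively, so that the identity is $0$, the unitization $\widehat H$ coincides with $H$, and the power $b^k$ appearing in the Archimedean definition becomes the $k$-fold sum $kb$.

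First I would dispose of the easy structural properties. A Puiseux monoid $H$ is a submonoid of the cancellative, commutative monoid $(\mathbb Q_{\ge 0}, +)$, hence $H$ is itself cancellative and commutative; being commutative, it is duo, since $a + X = X + a$ for all $a \in H$ and $X \subseteq H$. Moreover $H$ is reduced: if $u + v = 0$ with $u, v \in H \subseteq \mathbb Q_{\ge 0}$, then $u = v = 0$, so $H^\times = \{0\} = \{1_H\}$. These observations take only a line or two.

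The one point requiring a genuine argument---and the only place where the arithmetic of $\mathbb Q_{\ge 0}$ is used---is that $H$ is Archimedean. Fix $a, b \in H$ with $b$ a non-unit, i.e., $b > 0$; the case $a = 0$ is trivial because then $\widehat H a \widehat H = H$ contains every power of $b$, so assume $a > 0$. Choosing a common denominator $d \in \mathbb N^+$, write $a = m/d$ and $b = n/d$ with $m, n \in \mathbb N^+$. The key observation is the identity $mb = mn/d = na$, which gives $b^m = mb = na = a + (n-1)a \in a + H$, where $(n-1)a \in H$ because it is $0$ when $n = 1$ and a finite sum of copies of $a$ otherwise. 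Since $m \ge 1$, this produces the required power and shows $b^m \in \widehat H a \widehat H$, i.e., $a \mid_H b^m$. Hence $H$ is Archimedean.

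With all hypotheses in hand, Theorem~\ref{thm:isom-ideals-fg-primary} applied to the isomorphism $\mathfrak I_\fin(H) \cong \mathfrak I_\fin(K)$ yields $H_\red \cong K_\red$. To finish I would remove the reduction: since $H$ is reduced, cancellative, and duo, Remark~\ref{rem:associated-in-unit-canc-duos} (with $S = H$, so that $\widehat H = H$ is reduced and unit-cancellative) shows that two elements of $H$ are associated exactly when they are equal, whence $H$ is isomorphic to $H_\red$; the same reasoning gives $K \cong K_\red$. Chaining $H \cong H_\red \cong K_\red \cong K$ completes the argument. I expect the Archimedean verification---concretely, noticing the relation $mb = na$ after clearing denominators---to be the sole non-routine step, everything else being bookkeeping around the definitions and the already-established results.
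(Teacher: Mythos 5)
Your proposal is correct and follows essentially the same route as the paper: verify that Puiseux monoids are cancellative, commutative (hence duo), reduced, and Archimedean by clearing denominators, then apply Theorem~\ref{thm:isom-ideals-fg-primary} together with Remark~\ref{rem:associated-in-unit-canc-duos} to identify $H$ with $H_\red$ and $K$ with $K_\red$. If anything, your treatment is slightly more careful than the paper's, since you dispose of the case $a = 0$ explicitly, whereas the paper's choice of exponent $k := ad$ silently degenerates there.
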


\begin{proof}
Let $M$ be a Puiseux monoid (written additively). Since Puiseux monoids are cancellative, reduced and commutative (thus duo), it is enough by Remark \ref{rem:associated-in-unit-canc-duos} and Theorem~\ref{thm:isom-ideals-fg-primary} to check that $M$ is Archimedean.

Let $x, y \in M$ with $y \ne 0$, and write $x = a/b$ and $y = c/d$, where $a \in \mathbb N$ and $b, c, d \in \mathbb N^+$. We have $bcx = \allowbreak ady = ac$ and hence $x \mid_M ky$, where $k := ad \in \mathbb N^+$.
Consequently, $M$ is Ar\-chi\-me\-de\-an, since $y$ is in fact an arbitrary non-unit of $M$ (obviously, every Puiseux monoid is reduced).
\end{proof}

By \cite[Example~
3.7)]{sp-pm}, not every Puiseux monoid is strongly Archimedean (see also \cite[Theorem~3.4]{sp-pm}). So, we cannot apply Theorem~\ref{th:isom-ideals-sp} to the class of Puiseux monoids, for which the answer to Question~\ref{quest:3} remains open. However, Theorem~\ref{th:isom-ideals-sp} applies to numerical monoids, leading to the last result of the section.

\begin{corollary}\label{cor:numerical-mons}
If $H$ and $K$ are numerical monoids with $\mathfrak{I}(H)$ isomorphic to $\mathfrak{I}(K)$, then $H=K$.
\end{corollary}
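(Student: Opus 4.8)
The plan is to reduce everything to Theorem~\ref{th:isom-ideals-sp} and then to upgrade the resulting abstract isomorphism to an honest equality of subsets of $\mathbb{N}$. First I would record the cheap structural facts: a numerical monoid $M$ (written additively) is a submonoid of $(\mathbb{N},+)$, hence cancellative and commutative (so duo), and its only unit is $0$, so it is reduced. The only point requiring an argument is that $M$ is strongly Archimedean. To see this, let $F$ be the Frobenius number of $M$, i.e.\ the largest integer not in $M$ (it exists since $M$ has finite complement in $\mathbb{N}$), so that every integer $>F$ lies in $M$. Because $M$ is reduced, its non-units are exactly its nonzero elements, and each such element is $\geq 1$. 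Thus, given $a \in M$, I would choose any integer $k > a + F$: for any non-units $b_1, \ldots, b_k \in M$ one has $b_1 + \cdots + b_k \geq k > a+F$, so $(b_1 + \cdots + b_k) - a > F$ is a nonnegative integer lying in $M$, which means $b_1 + \cdots + b_k \in a + M = \widehat{M}a\widehat{M}$. This is precisely the strongly Archimedean condition.

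With this established, Theorem~\ref{th:isom-ideals-sp} applies to $H$ and $K$ and yields that $\mathfrak{I}(H) \cong \mathfrak{I}(K)$ forces $H_{\red} \cong K_{\red}$. Since $H$ and $K$ are reduced and cancellative (hence unit-cancellative), Remark~\ref{rem:associated-in-unit-canc-duos} shows that associatedness is trivial in each, so $H \cong H_{\red}$ and $K \cong K_{\red}$ via $x \mapsto \{x\}$. Combining these gives a monoid isomorphism $H \cong K$.

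It remains to pass from $H \cong K$ to $H = K$, which I expect to be the main (though short) obstacle, and which I would settle via difference groups. The Grothendieck (difference) group of a numerical monoid is all of $\mathbb{Z}$, because $M$ contains two consecutive integers and hence $1$ lies in the subgroup of $\mathbb{Z}$ generated by $M$. A monoid isomorphism $\phi \colon H \to K$ between cancellative commutative monoids extends uniquely, by functoriality of the difference group, to a group automorphism $\tilde{\phi} \colon \mathbb{Z} \to \mathbb{Z}$. The only automorphisms of $\mathbb{Z}$ are $\pm\,\mathrm{id}$, and since $\phi$ sends nonzero elements of $H \subseteq \mathbb{N}$ to nonzero (hence positive) elements of $K \subseteq \mathbb{N}$, the sign must be $+$. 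Therefore $\tilde{\phi} = \mathrm{id}_{\mathbb{Z}}$, so $\phi(h) = h$ for every $h \in H$, and consequently $K = \phi[H] = H$, as desired.

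The only genuinely non-automatic ingredients are the strongly Archimedean verification (which is where the finite-complement hypothesis enters, through the Frobenius number) and the rigidity fact that isomorphic numerical monoids coincide; everything else is bookkeeping that funnels the hypotheses into Theorem~\ref{th:isom-ideals-sp}.
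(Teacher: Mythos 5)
Your proposal is correct and follows essentially the same route as the paper: check that numerical monoids are strongly Archimedean, cancellative, commutative (hence duo), and reduced, funnel the hypothesis through Theorem~\ref{th:isom-ideals-sp} and Remark~\ref{rem:associated-in-unit-canc-duos} to obtain $H \cong K$, and then use rigidity of numerical monoids to conclude $H=K$. The only difference is that you prove the two ingredients the paper treats as known --- the strongly Archimedean property (spelled out via the Frobenius number, where the paper just notes that a numerical monoid contains all sufficiently large integers) and the fact that isomorphic numerical monoids are equal (via the difference-group argument, where the paper cites this as folklore from \cite{Hig-969}).
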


\begin{proof} Every numerical monoid is cancellative, commutative, and reduced; in addition, it is strongly Ar\-chi\-me\-de\-an, for it contains all positive integers from a certain point on. So, by Remark \ref{rem:associated-in-unit-canc-duos} and Theorem \ref{thm:isom-ideals-fg-primary}, $H$ is isomorphic to $K$. This finishes the proof, as it is folklore that two numerical monoids are isomorphic if and only if they are equal \cite[Theorem 3.2]{Hig-969}.
\end{proof}

\section{Back to the Tamura--Shafer problem}
\label{sec:back-to-power-sgrps}

This short section can be regarded as more of an appendix. The conclusions may not be particularly deep, as they focus on very specific classes of objects. Nevertheless, it is at least intriguing that, as we are about to illustrate, some of the ideas set forward in the present work can be applied to isomorphism problems for power semigroups, which are notoriously difficult.

\begin{lemma}\label{lem:iso-image-contains-one-old}
Let $H$ and $K$ be monoids, and let $f$ be an isomorphism $\mathcal{P}(H)\to \mathcal{P}(K)$. If $1_H \in f^{-1}(K)$, then $f[\mathfrak{I}(H)] \subseteq \mathfrak{I}(K)$. 
If, in addition, $1_K \in f(H)$, then $f$ restricts to an isomorphism $\mathfrak{I}(H) \to \mathfrak{I}(K)$.
\end{lemma}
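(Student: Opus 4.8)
The plan is to reduce everything to the behaviour of the single set $A := f^{-1}(K)$, where $K$ is understood as the top element of $\mathcal P(K)$ (the whole monoid viewed as a subset of itself). Note that $A$ is a non-empty subset of $H$ with $f(A)=K$, and the first hypothesis says precisely that $1_H\in A$. First I would prove the weaker conclusion $f[\mathfrak I(H)]\subseteq\mathfrak I(K)$. Fix an ideal $I$ of $H$. Since $1_H\in A$, we have $I=I\{1_H\}\subseteq IA$; and since $A\subseteq H$, Remark~\ref{rem:duo-makes-it-easier}\ref{rem:duo-makes-it-easier(1)} gives $IA\subseteq IH=I$. Hence $IA=I$, and symmetrically $AI=I$. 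Applying the homomorphism $f$ and using $f(A)=K$, this yields
\[
f(I)\,K=f(I)\,f(A)=f(IA)=f(I)\qquad\text{and}\qquad K\,f(I)=f(AI)=f(I),
\]
so $f(I)$, being a non-empty subset of $K$, is an ideal of $K$. This establishes the first assertion.

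For the second assertion I would invoke symmetry. Since $f^{-1}\colon\mathcal P(K)\to\mathcal P(H)$ is again a semigroup isomorphism, the first part applies to it verbatim, provided its hypothesis holds; that hypothesis reads $1_K\in(f^{-1})^{-1}(H)=f(H)$, which is exactly the additional assumption. It therefore gives $f^{-1}[\mathfrak I(K)]\subseteq\mathfrak I(H)$, equivalently $\mathfrak I(K)\subseteq f[\mathfrak I(H)]$. Combined with the inclusion already proved, this forces $f[\mathfrak I(H)]=\mathfrak I(K)$. Thus $f$ restricts to a bijection of $\mathfrak I(H)$ onto $\mathfrak I(K)$, and since both are subsemigroups of the respective power semigroups and $f$ is a homomorphism, the restriction is a semigroup isomorphism $\mathfrak I(H)\to\mathfrak I(K)$.

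The argument is short, so there is no single hard computation; the main point to get right is conceptual. One must resist the temptation to first identify $f(H)$ with $K$: that is not what the hypotheses deliver, and it is not needed. What matters is only the absorbing identity $IA=I=AI$, obtained from the sandwich $I\subseteq IA\subseteq IH=I$ together with $1_H\in A\subseteq H$, after which the homomorphism property of $f$ does all the work through the relation $f(I)\,K=f(IA)$. The only real care is in keeping the two roles of the symbol $K$ separate---$K$ as the codomain monoid and $K$ as the specific element $f(A)$ of $\mathcal P(K)$---and in recording that the extra hypothesis is precisely the instance of the first hypothesis needed to run the argument for $f^{-1}$.
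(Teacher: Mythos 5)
Your proof is correct and follows essentially the same route as the paper's own argument: both derive the absorption identity $IA = I = AI$ from the sandwich $I \subseteq IA \subseteq IH = I$ (with $A := f^{-1}(K)$, called $M$ in the paper), apply the homomorphism $f$ to conclude $f(I)$ is an ideal, and then run the same argument on $f^{-1}$ for the second assertion. Your closing remark about not conflating $f(H)$ with $K$ is a sound observation, but no discrepancy with the paper.
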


\begin{proof}
Assume $1_H \in f^{-1}(K) =: M$ and let $I \in \mathfrak I(H)$. Then, by Remark \ref{rem:duo-makes-it-easier}\ref{rem:duo-makes-it-easier(1)}, $I = I1_H \subseteq IM \subseteq IH = I$ and hence $I = IM$. It follows that $f(I)K = f(I)f(M) = f(IM) = f(I)$; and in a similar way, $f(I) = K f(I)$. Thus, $f$ sends ideals of $H$ to ideals of $K$, implying that $f[\mathfrak{I}(H)] \subseteq \mathfrak{I}(K)$. This is enough to finish the proof, because the same argument applied to the inverse of $f$ (which is an isomorphism from $\mathcal P(K)$ to $\mathcal P(H)$) shows that if $1_K \in f(H)$, then $f^{-1}[\mathfrak I(K)] \subseteq \mathfrak I(H)$ and hence $\mathfrak I(K) \subseteq f[\mathfrak I(H)]$.
\end{proof}

For the next lemma, we recall that a semigroup is \evid{periodic} if each of its elements generates a finite subsemigroup; in a group, this is equivalent to every element having finite order.

\begin{lemma}\label{lem:idemp-monoid}
    Let $H$ be a strongly Archimedean, cancellative, duo  monoid, and let $M$ be a non-empty subset of $H$ such that $M = M^2$. If the group of units $H^\times$ of $H$ is  periodic, then $1_H \in M$.
\end{lemma}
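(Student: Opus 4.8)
The plan is to first record that the condition $M = M^2$ forces $M^n = M$ for every $n \in \mathbb N^+$ (an immediate induction: $M^{n+1} = M^n M = MM = M$), so in particular $M$ is a subsemigroup of $H$ in which every element admits a factorization into arbitrarily many factors drawn from $M$. I would then reduce the lemma to the single claim that $M$ meets the group of units $H^\times$. Indeed, if some $u \in M$ is a unit, then periodicity of $H^\times$ gives an integer $m \ge 1$ with $u^m = 1_H$, and since $u^m \in M^m = M$, we immediately obtain $1_H \in M$. This is the only place where the hypothesis on $H^\times$ enters.

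The heart of the argument is therefore to show that $M$ contains a unit, which I would establish by contradiction. Suppose $M \subseteq H \setminus H^\times$, fix any $a \in M$ (recall $M \ne \emptyset$), and apply the strongly Archimedean property to $a$: there is an integer $k \ge 1$ such that $HaH$ contains every product of $k$ non-units of $H$, and by duoness (Remark \ref{rem:duo-makes-it-easier}\ref{rem:duo-makes-it-easier(3)}) we have $HaH = aH$. Now, using $a \in M = M^{k+1}$, write $a = m_1 \cdots m_{k+1}$ with each $m_i \in M$, hence each a non-unit by our assumption. The first $k$ factors form a product of $k$ non-units, so $m_1 \cdots m_k \in aH$, say $m_1 \cdots m_k = ah$ for some $h \in H$. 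Substituting, $a = (m_1 \cdots m_k)\,m_{k+1} = a\,h\,m_{k+1}$, and cancelling $a$ on the left (here cancellativity is used) yields $1_H = h\,m_{k+1}$. By Dedekind-finiteness of the duo monoid $H$ (Remark \ref{rem:duo-makes-it-easier}\ref{rem:duo-makes-it-easier(6)}), this forces $m_{k+1} \in H^\times$, contradicting $m_{k+1} \in M \subseteq H \setminus H^\times$.

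The main obstacle---and the one genuinely nontrivial idea---is the splitting trick in the second paragraph: expressing $a$ as a product of $k+1$ elements of $M$ and then peeling off exactly one factor, so that the strongly Archimedean bound applied to the remaining $k$ factors, together with cancellativity, collapses to a unit equation. It is worth stressing that strong Archimedeanness (rather than mere Archimedeanness) is essential here, since the $k$ factors $m_1, \ldots, m_k$ need not be powers of a single element; and cancellativity is precisely what converts the containment $m_1 \cdots m_k \in aH$ into the decisive identity $1_H = h\,m_{k+1}$, after which Remark \ref{rem:duo-makes-it-easier}\ref{rem:duo-makes-it-easier(6)} delivers the contradiction.
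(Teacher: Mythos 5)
Your proof is correct and follows essentially the same route as the paper's: both reduce the lemma to showing $M \cap H^\times \ne \emptyset$ (finishing with periodicity and $M = M^k$), and both derive a contradiction from the assumption $M \subseteq H \setminus H^\times$ by combining the strongly Archimedean bound with duoness, cancellativity, and Dedekind-finiteness. The only cosmetic difference is how the cancellation equation is produced: the paper applies the strongly Archimedean property to $x^2$ for $x \in M$, getting $x \in M = M^n \subseteq x^2H$ and cancelling to obtain $1_H = xy$, whereas you apply it to $a$ itself and peel the last factor off a factorization $a = m_1 \cdots m_{k+1} \in M^{k+1}$.
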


\begin{proof}
Since $M = M^2$ (by hypothesis), a routine induction shows that $M = M^k$ for all $k \in \mathbb N^+$ (namely, $M$ is a subsemigroup of $H$). We will freely use this observation in the remainder of the proof.

We claim that $H^\times$ (the group of units of $H$) and $M$ are not disjoint. Suppose to the contrary that $M \subseteq H \setminus H^\times$, and let $x \in M$. Since $H$ is strongly Archimedean and duo, there then exists an integer $n \ge 1$ such that $(H \setminus H^\times)^n \subseteq x^2 H$. It follows that $M = M^n \subseteq x^2 H$ and, hence, $x = x^2 y$ for a certain $y \in H$. So, by cancelling out $x$, we obtain $1_H = xy$, which, by Remark \ref{rem:duo-makes-it-easier}\ref{rem:duo-makes-it-easier(6)}, yields $x \in M \cap H^\times$ (a contradiction). 

Thus, the set $M \cap H^\times$ is non-empty. Accordingly, let $u \in M \cap H^\times$. Then, since $H^\times$ is a group and each of its elements has finite order (by hypothesis), there is an integer $k \ge 1$ such that $1_H = u^k \in M^k = M$. 
\end{proof}

Our final lemma is essentially contained in \cite{Sha67}; we include it here only for the sake of completeness.

\begin{lemma}\label{lem:globally-isomorphism-have-isomorphic-unit-groups}
Let $H$ and $K$ be globally isomorphic monoids. The group of units of $H$ is then isomorphic to the group of units of $K$. In particular, if either of $H$ or $K$ is reduced, then both are.
\end{lemma}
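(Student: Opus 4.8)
The plan is to extract the group of units as an intrinsic, isomorphism-invariant feature of the power semigroup, so that any isomorphism $\Phi \colon \mathcal P(H) \to \mathcal P(K)$ automatically carries the units of $H$ onto the units of $K$. The natural candidate is the \emph{group of units} of the monoid $\mathcal P(H)$ itself: since $\mathcal P(H)$ is a monoid with identity $\{1_H\}$ (Remark \ref{rem:duo-makes-it-easier}\ref{rem:duo-makes-it-easier(1)}), it has a group of units $\mathcal P(H)^\times$, and any monoid isomorphism restricts to a group isomorphism between the respective groups of units. The crux is therefore to identify $\mathcal P(H)^\times$ concretely and to show it is isomorphic to $H^\times$. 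First I would check that an isomorphism of semigroups $\mathcal P(H) \to \mathcal P(K)$ is automatically a monoid isomorphism (as recorded in Remark \ref{rem:morphisms}, an isomorphism of monoids preserves identities), so that $\Phi(\{1_H\}) = \{1_K\}$ and $\Phi$ maps $\mathcal P(H)^\times$ bijectively and homomorphically onto $\mathcal P(K)^\times$.

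The key computational step is the claim that $\mathcal P(H)^\times = \{\,\{u\} : u \in H^\times\,\}$, i.e. the invertible elements of the power semigroup are precisely the singletons built from units of $H$. One inclusion is immediate: if $u \in H^\times$ then $\{u\}\{u^{-1}\} = \{uu^{-1}\} = \{1_H\}$ and likewise on the other side, so $\{u\}$ is a unit. For the reverse inclusion I would argue that if $X, Y \subseteq H$ satisfy $XY = \{1_H\} = YX$, then $X$ must be a singleton and its unique element must lie in $H^\times$. From $XY = \{1_H\}$ one reads off that for every $x \in X$ and $y \in Y$ we have $xy = 1_H$, and similarly $yx = 1_H$ from $YX = \{1_H\}$; a short argument (using that $x y = 1_H$ and $y x' = 1_H$ force $x = x'$) shows $X$ and $Y$ are forced to be singletons $\{u\}$, $\{v\}$ with $uv = vu = 1_H$, whence $u \in H^\times$. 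This establishes that the map $u \mapsto \{u\}$ is a group isomorphism $H^\times \to \mathcal P(H)^\times$.

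Combining these two steps gives the chain of group isomorphisms $H^\times \cong \mathcal P(H)^\times \cong \mathcal P(K)^\times \cong K^\times$, where the middle isomorphism is the restriction of $\Phi$, proving the first assertion. For the ``In particular'' clause, I would observe that a monoid $H$ is reduced exactly when $H^\times$ is trivial, i.e. $|H^\times| = 1$; since group isomorphisms preserve cardinality, $|H^\times| = |K^\times|$, so one of the groups of units is trivial if and only if the other is, and thus $H$ is reduced if and only if $K$ is.

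The main obstacle I anticipate is purely in the reverse inclusion of the unit computation: verifying that $XY = \{1_H\} = YX$ truly forces $X$ and $Y$ to be singletons rather than merely constraining their products. This requires care because setwise products can collapse many pairs onto a single value; the argument hinges on the bookkeeping that \emph{every} product $xy$ with $x\in X,\,y\in Y$ must equal $1_H$, and then exploiting the monoid axioms (no cancellativity is needed, only the definition of units) to pin down uniqueness. Everything else is routine transfer of structure along the isomorphism and elementary group theory.
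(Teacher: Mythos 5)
Your proposal is correct and takes essentially the same approach as the paper's proof: both identify $\mathcal P(H)^\times$ as the singletons $\{u\}$ with $u \in H^\times$, note that the given semigroup isomorphism is automatically a monoid isomorphism and hence restricts to the groups of units, and conclude $H^\times \cong \mathcal P(H)^\times \cong \mathcal P(K)^\times \cong K^\times$. The only cosmetic difference is in pinning down that $X$ is a singleton: the paper bounds $|X| \le |XY| = 1$ after observing $X, Y \subseteq H^\times$, while you use the left/right-inverse identity $x = x(yx') = (xy)x' = x'$; both are immediate and equally valid.
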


\begin{proof}
The power semigroup of a monoid $M$ is a monoid in its own right, the identity of $\mathcal P(M)$ being the singleton $\{1_M\}$. On the other hand, a non-empty set $X \subseteq M$ is a unit in $\mathcal P(M)$ if and only if $X$ is a one-element subset of the group of units $M^\times$ of $M$; in particular, if $X \in \mathcal P(M)^\times$ and hence $XY = YX = \{1_M\}$ for some $Y \in \mathcal P(M)$, then $xy = yx = 1_M$ for all $x \in X$ and $y \in Y$, implying that $X, Y \subseteq M^\times$ and hence $|X| \le |XY| = 1$. It follows that $M^\times$ is isomorphic to the group of units of $\mathcal P(M)$ via the embedding $u \mapsto \{u\}$. 

That being said, let $f$ be an isomorphism from $\mathcal P(H)$ to $\mathcal P(K)$. Since any semigroup isomorphism from a monoid to a monoid sends the identity to the identity (see, e.g., the last lines of \cite[Section 2]{Tri-2023(c)}), $f$ restricts to an isomorphism between the corresponding groups of units. So, we conclude from the above that $H^\times$ is isomorphic to $K^\times$. The rest is obvious.
\end{proof}

So we arrive at the last theorem of the paper. The result provides a positive answer to Question~\ref{quest:2} (i.e., the Tamura--Shafer problem) for a new, albeit rather special, class of \textit{cancellative} semigroups for which it appears that no answer was previously known;
and it serves as a complement to \cite[Theorem 25]{Tri-2023(c)}, which solves Question \ref{quest:kobayashi} (i.e., the Kobayashi problem) in the affirmative for the class of cancellative \textit{commutative} semigroups. 

\begin{theorem}\label{thm:glob-iso-sp}
 If $H$ and $K$ are strongly Archimedean, cancellative, duo monoids and at least one of them is reduced, then every isomorphism from $\mathcal{P}(H)$ to $\mathcal{P}(K)$ restricts to an isomorphism from $\mathfrak{I}(H)$ to $\mathfrak{I}(K)$. In particular, if $H$ and $K$ are globally isomorphic, then they are isomorphic.
\end{theorem}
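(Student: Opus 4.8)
The plan is to leverage the machinery already assembled in the paper: Lemma~\ref{lem:iso-image-contains-one-old} tells us that an isomorphism $f \colon \mathcal P(H) \to \mathcal P(K)$ restricts to an isomorphism $\mathfrak I(H) \to \mathfrak I(K)$ as soon as we can verify the two membership conditions $1_H \in f^{-1}(K)$ and $1_K \in f(H)$. So the entire first assertion reduces to checking these two conditions, and the ``In particular'' part will then follow immediately by chaining with Theorem~\ref{th:isom-ideals-sp}. Concretely, once $f$ restricts to an isomorphism $\mathfrak I(H) \to \mathfrak I(K)$, Theorem~\ref{th:isom-ideals-sp} gives that $H_{\red}$ is isomorphic to $K_{\red}$; and the reducedness hypothesis, propagated across the global isomorphism via Lemma~\ref{lem:globally-isomorphism-have-isomorphic-unit-groups}, should let us identify $H$ with $H_{\red}$ and $K$ with $K_{\red}$, yielding $H \cong K$.

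First I would verify the membership conditions. The key observation is that $H$ and $K$ are each the \emph{top} element of their power monoids: $H \cdot H = H$ since $H$ is a monoid, so $H$ is idempotent in $\mathcal P(H)$, and likewise $K$ is idempotent in $\mathcal P(K)$. Being an isomorphism, $f$ maps $H$ (the largest subset) to some idempotent of $\mathcal P(K)$; I expect the natural route is to show $f(H) = K$ and $f^{-1}(K) = H$, but even without that one can argue as follows. Set $M := f^{-1}(K)$. Since $K = K^2$ and $f$ is an isomorphism, $M = M^2$, so $M$ is a non-empty idempotent subset of $H$. This is exactly the hypothesis of Lemma~\ref{lem:idemp-monoid}, and provided $H^\times$ is periodic we conclude $1_H \in M = f^{-1}(K)$. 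Symmetrically, $f(H)$ is a non-empty idempotent subset of $K$, and if $K^\times$ is periodic then Lemma~\ref{lem:idemp-monoid} gives $1_K \in f(H)$. So both conditions of Lemma~\ref{lem:iso-image-contains-one-old} hold and $f$ restricts to an isomorphism $\mathfrak I(H) \to \mathfrak I(K)$.

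The main obstacle I anticipate is the periodicity hypothesis that Lemma~\ref{lem:idemp-monoid} demands: the lemma as stated requires $H^\times$ (resp.\ $K^\times$) to be periodic, yet the theorem's only hypothesis in this direction is that \emph{one} of $H$, $K$ is reduced. The resolution should come from Lemma~\ref{lem:globally-isomorphism-have-isomorphic-unit-groups}: since $H$ and $K$ are globally isomorphic, their groups of units are isomorphic, and if one of them is reduced (hence trivial, and in particular periodic) then both are reduced. A trivial group is vacuously periodic, so the periodicity hypothesis of Lemma~\ref{lem:idemp-monoid} is met for \emph{both} monoids. Thus the reducedness assumption is precisely what is needed to feed Lemma~\ref{lem:idemp-monoid}, and I would apply Lemma~\ref{lem:globally-isomorphism-have-isomorphic-unit-groups} at the very start to upgrade ``at least one of $H, K$ is reduced'' to ``both $H$ and $K$ are reduced.''

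For the ``In particular'' part, suppose $H$ and $K$ are globally isomorphic, so a fixed isomorphism $f \colon \mathcal P(H) \to \mathcal P(K)$ exists. By the first part, $f$ restricts to an isomorphism $\mathfrak I(H) \to \mathfrak I(K)$, whence $\mathfrak I(H) \cong \mathfrak I(K)$. Theorem~\ref{th:isom-ideals-sp} then yields $H_{\red} \cong K_{\red}$. Finally, because both $H$ and $K$ are reduced (as established above) and are cancellative duo monoids, Remark~\ref{rem:associated-in-unit-canc-duos} identifies each monoid with its own reduced quotient via $x \mapsto \{x\}$, so $H \cong H_{\red} \cong K_{\red} \cong K$, completing the argument.
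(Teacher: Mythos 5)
Your proposal is correct and follows essentially the same route as the paper's own proof, which likewise first invokes Lemma~\ref{lem:globally-isomorphism-have-isomorphic-unit-groups} to upgrade the hypothesis to both monoids being reduced, and then combines Lemmas~\ref{lem:iso-image-contains-one-old} and \ref{lem:idemp-monoid} with Theorem~\ref{th:isom-ideals-sp}. The only difference is that you spell out the details the paper leaves implicit: the idempotency of $f^{-1}(K)$ and $f(H)$, the vacuous periodicity of trivial unit groups, and the identification of $H$ and $K$ with their reduced quotients via Remark~\ref{rem:associated-in-unit-canc-duos}.
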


\begin{proof}
Note first that, by Lemma \ref{lem:globally-isomorphism-have-isomorphic-unit-groups}, $H$ and $K$ are both reduced monoids. The conclusion is then immediate from Theorem \ref{th:isom-ideals-sp} and Lemmas~\ref{lem:iso-image-contains-one-old} and \ref{lem:idemp-monoid}.
\end{proof}

It is worth noting that, by Examples \ref{exa:strongly-archi-cance-non-commutative-duo}\ref{exa:strongly-archi-cance-non-commutative-duo(2)} and \ref{exa:more-examples}, a strongly Ar\-chi\-me\-de\-an, cancellative, reduced, duo monoid need not be commutative. Therefore, Theorem \ref{thm:glob-iso-sp} is not a special case of \cite[Theorem 25]{Tri-2023(c)}.

\section*{Acknowledgments}

The first author was partly supported by the grants ProyExcel\_00868 and FQM--343, both funded by the Junta de Andaluc\'ia. He also acknowledges financial support from grant PID2022-138906NB-C21, funded by MICIU/AEI/\bignumber{10.13039}/\bignumber{501100011033} and by the ERDF ``A way of making Europe''; and from the Spanish Ministry of Science and Innovation (MICINN), through the ``Severo Ochoa and María de Maeztu Programme for Centres and Unities of Excellence'' (CEX2020-001105-M). The second author was supported by the Natural Science Foundation of Hebei Province, grant no.~A2023205045. Both authors are grateful to the anonymous referee of an earlier version of this work for their careful proofreading and valuable suggestions.

\end{document}